\def\NAT@spacechar{~}
\theoremstyle{plain}
\newtheorem{thm}{Theorem}
\newtheorem{lem}[thm]{Lemma}
\newtheorem{prop}[thm]{Proposition}
\newtheorem{cor}[thm]{Corollary}
\newtheorem*{claim}{Claim}
\crefname{lem}{Lemma}{Lemmas}
\crefname{prop}{Proposition}{Propositions}
\crefname{thm}{Theorem}{Theorems}
\crefname{cor}{Corollary}{Corollaries}
\newcommand{\arXiv}[1]{arXiv:\,\href{http://arxiv.org/abs/#1}{#1}}
\newcommand{\msn}[1]{MR:\,\href{http://www.ams.org/mathscinet-getitem?mr=MR#1}{#1}}
\newcommand{\doi}[1]{doi:\,\href{http://dx.doi.org/#1}{#1}}
\DeclarePairedDelimiter\ceil\lceil\rceil
\DeclarePairedDelimiter\floor\lfloor\rfloor
\DeclareMathOperator{\bx}{box}
\DeclareMathOperator{\dist}{dist}
\DeclareMathOperator{\ltw}{ltw}
\DeclareMathOperator{\tw}{tw}
\renewcommand{\ge}{\geqslant}
\renewcommand{\le}{\leqslant}
\renewcommand{\geq}{\geqslant}
\renewcommand{\leq}{\leqslant}
\begin{document}

\title[Better Bounds for Poset Dimension and Boxicity]{Better Bounds for Poset Dimension\\ and Boxicity}

\author{Alex Scott}
\address[A. Scott]{Mathematical Institute, University of Oxford, Oxford OX2 6GG, United Kingdom}
\email{scott@maths.ox.ac.uk}
\thanks{Alex Scott is supported by a Leverhulme Trust Research Fellowship.}

\author{David R. Wood}
\address[D. R. Wood]{School of Mathematics, Monash University, Melbourne, Australia}
\email{david.wood@monash.edu}
\thanks{David Wood is supported by the Australian Research Council.}

\subjclass[2010]{Primary 05C62, 06A07}

\date{17th June 2018}

\maketitle

\begin{abstract} 
The \emph{dimension} of a poset $P$ is the minimum number of total orders whose intersection is $P$.  We prove that the dimension of every poset whose comparability graph has maximum degree $\Delta$ is at most $\Delta\log^{1+o(1)} \Delta$. This result improves on a 30-year old bound of F\"uredi and Kahn, and is within a $\log^{o(1)}\Delta$ factor of optimal. We prove this result via the notion of boxicity. The \emph{boxicity} of a graph $G$ is the minimum integer $d$ such that $G$ is the intersection graph of $d$-dimensional axis-aligned boxes. We prove that every graph with maximum degree $\Delta$ has boxicity at most $\Delta\log^{1+o(1)} \Delta$, which is also within a $\log^{o(1)}\Delta$ factor of optimal. We also show that the maximum boxicity of graphs with Euler genus $g$ is $\Theta(\sqrt{g \log g})$, which solves an open problem of Esperet and Joret and is tight up to a constant factor. 
\end{abstract}

%%%%%%%%%%%%%%%%%%%%%%
\section{Introduction}
\label{Introduction}

\subsection{Poset Dimension and Degree}

The \emph{dimension} of a poset $P$, denoted by $\dim(P)$, is the minimum number of total orders whose intersection is $P$. Let $\dim(\Delta)$ be the maximum dimension of a poset whose comparability graph has maximum degree at most $\Delta$. Several bounds on $\dim(\Delta)$ have been proved in the literature. In unpublished work, R\"odl and Trotter proved the first upper bound, $\dim(\Delta)\leq 2\Delta^2+2$; see \citep[pp.~165--166]{Trotter92} for the proof. 
% referenced in \cite{Trotter96,FK86}, 
\citet{FK86} improved this result to 
\begin{equation}
\label{DimLowerBound}
\dim(\Delta)\leq O(\Delta\log^2\Delta).
\end{equation}
On the other hand,  \citet*{EKT91} proved the lower bound, 
\begin{equation}
\label{DimUpperBound}
\dim(\Delta)\geq \Omega(\Delta\log \Delta).
\end{equation}
Both these proofs use probabilistic methods. The problem of narrowing the gap between \eqref{DimLowerBound} and \eqref{DimUpperBound} was described as ``an important topic for further research'' by \citet{EKT91}; \citet{Trotter96} speculated that the lower bound could be improved and wrote ``a really new idea will be necessary to improve the upper bound---if this is at all possible''; and \citet[page 52]{Wang15} described the problem as ``one of the most challenging (and probably quite difficult) problems in dimension theory.'' 

Our first contribution is the following result, which is the first improvement to the F{\"u}redi--Kahn upper bound in 30 years, and shows that  \eqref{DimUpperBound} is sharp to within a $(\log\Delta)^{o(1)}$ factor: 
\begin{equation}
\label{dimeqn}
\dim(\Delta) = \Delta\log^{1+o(1)}\Delta.
\end{equation}
 A more precise result is given below (see \cref{dim}).

\subsection{Boxicity and Degree}

We prove \eqref{dimeqn} via the notion of boxicity. The \emph{boxicity} of a (finite undirected) graph $G$, denoted by $\bx(G)$, is the minimum integer $d$, such that $G$ is the intersection graph of boxes in $\mathbb{R}^d$. Here a \emph{box} is a Cartesian product $I_1\times I_2\times\dots\times I_d$, where $I_i\subseteq \mathbb{R}$ is an interval for each $i\in[d]$. So a graph $G$ has boxicity at most $d$ if and only if there is a set of $d$-dimensional boxes $\{B_v:v\in V(G)\}$ such that $B_v\cap B_w\neq\emptyset $ if and only if $vw\in E(G)$. Note that a graph has boxicity 1 if and only if it is an interval graph. It is easily seen that every  graph has finite boxicity. 

Let $\bx(\Delta)$ be the maximum boxicity of a graph with maximum degree $\Delta$. It is easily seen that $\bx(2)=2$, and \citet{AC14} proved that $\bx(3)= 3$. \citet*{CFS08} proved the first  general upper bound of $\bx(\Delta)\leq 2\Delta^2+2$, which was improved to $\Delta^2+2$ by \citet{Esperet09}. A breakthrough was made by  \citet*{ABC11} who noted the following connection to poset dimension:
\begin{equation}
\label{BoxDim}
\tfrac12 \bx(\Delta-1) \leq \dim(\Delta) \leq 2 \bx(\Delta).
\end{equation}
Thus $\dim(\Delta)=\Theta(\bx(\Delta))$. For the sake of completeness, we prove \eqref{BoxDim} in \cref{bxdim,dimbx}. 
%Given a graph $G$, let $P$ be the poset on $V(G)\times\{0,1\}$ where $(u,i) \preceq_P (v,j)$ if and only if $i=0$ and $j=1$, and $u=v$ or $uv\in E(G)$.  \citet{ABC11} proved that $\frac12 \dim(P) -2 \leq \bx(G) \leq 2 \dim(P)$.  The comparability graph of $P$ has maximum degree $\Delta(G)+1$.  Thus $\bx(\Delta) \leq 2\dim(\Delta+1)$.  Conversely, \citet{ABC11} proved that if $G$ is the comparability graph of a poset $P$, then $\dim(P ) \leq 2 \bx(G )$, implying  $\dim(\Delta) \leq 2 \bx(\Delta)$. 

\citet{ABC11} concluded from \eqref{DimLowerBound}, \eqref{DimUpperBound} and \eqref{BoxDim}  that 
\begin{equation*}
\Omega(\Delta\log\Delta) \leq \bx(\Delta) \leq O(\Delta\log^2\Delta).
\end{equation*}
%Note that this lower bound disproved the earlier conjecture of \citet{CFS08} that $\bx(\Delta)\leq O(\Delta)$.  
We improve the upper bound, giving the following result, which is equivalent to \eqref{dimeqn}:
\begin{equation}
\label{degreeeqn}
\bx(\Delta) = \Delta\log^{1+o(1)}\Delta.
\end{equation}
 Again, a more precise result is given below (see \cref{degree}). 

%This paper studies the following natural questions: 
%\begin{itemize}
%\item What is the maximum boxicity of a graph with maximum degree $\Delta$, denoted by $\bx(\Delta)$? 
%\item What is the maximum boxicity of a graph with Euler genus $g$?
%%, denoted by $\bx(g)$? 
%\end{itemize}
%We answer the first question up to a $\log^{o(1)}\Delta$ factor. We answer the second question up to a $O(1)$ factor. 

%\subsection{Degree}

%\begin{thm}
%\label{degree}
%%For every graph $G$ with maximum degree $\Delta$,
%%$$\bx(G) \leq (322 + o(1) )\, \Delta\log(\Delta)\, (2e)^{\sqrt{\log \log\Delta}} \,\log\log \Delta.$$
%$$\bx(\Delta) \leq (180 + o(1) )\, \Delta\log(\Delta)\, (2e)^{\sqrt{\log \log\Delta}} \,\log\log \Delta.$$
%\end{thm}
%
%Note that $(2e)^{\sqrt{\log\log\Delta}}\log\log\Delta \leq \log^{o(1)}(\Delta)$. Thus $\bx(\Delta) \leq \Delta\log^{1+o(1)}\Delta$.

%Given a poset $P$, \citet{Kimble} proved there is a poset $Q$ of height 2 such that $\dim(P)\leq \dim(Q) \leq \dim(P)+1$ and both $P$ and $Q$ have the same maximum degree (see the final remark in \citep{FK86})

%\begin{thm}
%\label{dim}
%%For every poset $P$ whose comparability graph has maximum degree $\Delta$,
%%$$\dim(P) \leq (644 + o(1) )\, \Delta\log(\Delta)\, (2e)^{\sqrt{\log \log\Delta}} \log\log\Delta 
%%\leq \Delta\log^{1+o(1)}(\Delta).$$
%$$\dim(\Delta) \leq (360 + o(1) )\, \Delta\log(\Delta)\, (2e)^{\sqrt{\log \log\Delta}} \log\log\Delta 
%\leq \Delta\log^{1+o(1)}(\Delta).$$
%\end{thm}
%

\subsection{Boxicity and Genus}

Next consider the boxicity of graphs embeddable in a given surface. 
\citet{Scheinerman} proved that every outerplanar graph has boxicity at most 2. 
\citet{Thomassen86} proved that every planar graph has boxicity at most 3 (generalised to `cubicity' by \citet{FF11}). 
\citet{EJ13} proved that every toroidal graph has boxicity at most 7, improved to 6 by \citet{Esp17}.  
The \emph{Euler genus} of an orientable surface with $h$ handles is $2h$. 
The \emph{Euler genus} of  a non-orientable surface with $c$ cross-caps is $c$. 
The \emph{Euler genus} of a graph $G$ is the minimum Euler genus of a surface in which $G$ embeds (with no crossings). \citet{EJ13} proved that every graph with Euler genus $g$ has boxicity at most $5g+3$. 
\citet{Esperet16} improved this upper bound to $O(\sqrt{g}\log g)$ and also noted that there are graphs of
Euler genus $g$ with boxicity $\Omega(\sqrt{g\log g})$, which follows from the result of \citet{EKT91} mentioned above. See \citep{Esp17} for more on the boxicity of graphs embedded in surfaces.

The second contribution of this paper is to improve the upper bound to match the lower bound up to a constant factor (see \cref{genus}). We conclude that the maximum boxicity of a graph with Euler genus $g$ is 
\begin{equation}\label{genusupper}
\Theta(\sqrt{g \log g}).
\end{equation}
%
%\begin{thm}
%\label{genus}
%For every graph $G$ with Euler genus $g$, as $g\to\infty$, 
%$$\bx(G) \leq (12+o(1)) \sqrt{g \log g}.$$
%\end{thm}
Furthermore, the implicit constant in \eqref{genusupper} is not large: the upper bound in  \cref{genus} is $(12+o(1))\sqrt{g\log g}$.

\subsection{Boxicity and Layered Treewidth}

The third contribution of the paper
is to prove a new upper bound on boxicity in terms of layered
treewidth, which is a graph parameter recently introduced by
\citet{DMW17}
(see \cref{layeredtreewidth}). This generalises the known bound in terms of treewidth, and leads to generalisations of known results for graphs embedded in surfaces where each edge is in a bounded number of crossings. 

\subsection{Related Work} 

%Finally we mention several related works. 
The present paper can be considered to be part of a body of research connecting poset dimension and graph structure theory. Several recent papers \citep{Walczak17,JMW17,JMOW17,JMMTWW16,JMTWW17,JMW17a,MW17,ST14} show that structural properties of the cover graph of a poset lead to bounds on its dimension. Finally, we mention the following relationships between boxicity and chromatic number. Graphs with boxicity 1 (interval graphs) are perfect.  \citet{AG60} proved that graphs with boxicity 2 are $\chi$-bounded. But  \citet{Burling65} constructed triangle-free graphs with boxicity 3 and unbounded chromatic number. 

%Boxicity has also been studied for the following special classes of graphs: circular arc graphs \citep{ABC14,BC11a}, line graphs \citep{CMS11}, chordal bipartite graphs \citep{CFM11}, leaf powers \citep{CFM11a}, asteroidal triple free graphs \citep{BC10}, graphs of given treewidth \citep{CS07},	series-parallel graphs \citep{BCR06}, grid intersection graphs \citep{BHBPW93}, Halin graphs \citep{CFS09}, Mycielski graphs \citep{Kamibeppu13}, product graphs	\citep{CIMR15}, degenerate graphs \citep{ACM14}, graphs with given crossing number \citep{ACM14}, graphs with given chromatic number \citep{Kamibeppu}. See \citep{CFS10,BCJS16,ABC12,ACS10,ABC10,CR83,Krat94,FMP17} for algorithmic aspects of boxicity. \citet{ACS14} study lower bounds on boxicity. 

%Separation dimension = boxicity of line graph \citep{BCGMR14,BCGMR16,ABCMR15}
%		
%cubicity \citep{CMR16,ABC10,SA09,SDS09}

%fractional boxicity \citep{Kamibeppu15}
		
%Ferrers dimension and boxicity \citep{CG10}
			
%graphs with boxicity {$\leq 2$} \citep{QW90}

%poset boxicity \citep{TW87}
%		
%{$k$}-suitable sets of arrangements and boxicity \citep{CR84}

%%%%%%%%%%%%%%%%%%%%%%%%%		
\section{Tools}
\label{tools}

\citet{Roberts69} introduced boxicity and proved the following two fundamental results. 

\begin{lem}[\citep{Roberts69}]
\label{sumboxicity}
For all graphs $G,G_1,\dots,G_r$ such that $G=G_1\cap \dots\cap G_r$, 
\begin{equation*}
\bx(G)\leq\sum_{i=1}^r\bx(G_i).
\end{equation*}
\end{lem}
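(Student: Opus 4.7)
The plan is to prove this by a direct construction: combine independent box representations of the $G_i$ by taking Cartesian products. Let $d_i := \bx(G_i)$ and for each $i \in [r]$ fix a box representation $\{B_v^{(i)} : v \in V(G)\} \subseteq \mathbb{R}^{d_i}$ that witnesses $\bx(G_i) = d_i$, so that $B_v^{(i)} \cap B_w^{(i)} \neq \emptyset$ if and only if $vw \in E(G_i)$ (we use that all $G_i$ share the vertex set $V(G)$). Set $d := d_1 + \dots + d_r$ and, for each $v \in V(G)$, define the box
\begin{equation*}
B_v := B_v^{(1)} \times B_v^{(2)} \times \dots \times B_v^{(r)} \subseteq \mathbb{R}^d.
\end{equation*}
Being a Cartesian product of intervals, $B_v$ is a $d$-dimensional axis-aligned box.

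The key step is then the elementary observation that Cartesian products intersect coordinate-wise: for all $v,w \in V(G)$,
\begin{equation*}
B_v \cap B_w \neq \emptyset \iff B_v^{(i)} \cap B_w^{(i)} \neq \emptyset \text{ for every } i \in [r] \iff vw \in E(G_i) \text{ for every } i \in [r],
\end{equation*}
and this last condition is exactly $vw \in E(G_1) \cap \dots \cap E(G_r) = E(G)$. Hence $\{B_v : v \in V(G)\}$ is a box representation of $G$ in dimension $d$, giving $\bx(G) \leq \sum_{i=1}^r \bx(G_i)$.

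There is no real obstacle; the only minor point to dispatch is the degenerate case $\bx(G_i)=0$ (i.e.\ $G_i = K_{|V(G)|}$), in which case one simply omits that factor from the product, or equivalently takes $B_v^{(i)}$ to be a single common point so that all pairs intersect trivially. This contributes nothing to the dimension, which is consistent with the inequality.
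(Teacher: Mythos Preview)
Your proof is correct and is exactly the approach the paper indicates: the paper does not spell out a proof but simply remarks that the lemma ``is proved trivially by taking Cartesian products,'' which is precisely the construction you carry out.
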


Note that \cref{sumboxicity} is proved trivially by taking Cartesian products.

\begin{lem}[\citep{Roberts69}] 
\label{numvertices}
Every $n$-vertex graph has boxicity at most $\floor{\frac{n}{2}}$. 
\end{lem}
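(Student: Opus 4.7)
The plan is to induct on $n$. The base cases $n\leq 2$ are immediate since every such graph is an interval graph. For the inductive step, if $G$ is complete then $\bx(G)\leq 1\leq\lfloor n/2\rfloor$, so we may assume $G$ contains two non-adjacent vertices $u,v$.

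Apply the inductive hypothesis to $G-\{u,v\}$ to obtain interval graphs $I_1,\dots,I_{\lfloor n/2\rfloor-1}$ on $V(G)\setminus\{u,v\}$ with intersection $G-\{u,v\}$. Extend each $I_j$ to an interval graph $I_j^\ast$ on $V(G)$ by giving both $u$ and $v$ intervals that contain the entire existing representation, making them universal vertices of $I_j^\ast$. It then suffices to produce one further interval graph $H$ on $V(G)$ satisfying $N_H(u)=N_G(u)$, $N_H(v)=N_G(v)$, and $H[V(G)\setminus\{u,v\}]$ complete: given such $H$, a quick check shows that $H\cap\bigcap_j I_j^\ast$ equals $G$, yielding $\bx(G)\leq\lfloor n/2\rfloor$.

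To construct $H$, I would place $u$'s interval to the left of $v$'s and assign each other vertex $w$ one of four templates depending on which of $N_G(u),N_G(v)$ contain $w$. A concrete choice is $u\mapsto[0,1]$, $v\mapsto[3,4]$, together with the templates $[0,4]$, $[0,2]$, $[2,4]$, $\{2\}$ according to whether $w$ lies in $N(u)\cap N(v)$, $N(u)\setminus N(v)$, $N(v)\setminus N(u)$, or outside $N(u)\cup N(v)$. A routine case analysis confirms that $u$ (resp.\ $v$) intersects precisely its $G$-neighbours; and since all four templates contain the point $2$, every two vertices of $V(G)\setminus\{u,v\}$ are adjacent in $H$. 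The whole argument is direct; the only mildly delicate point is arranging the templates so that the three required properties of $H$ hold simultaneously, which the common-point trick handles.
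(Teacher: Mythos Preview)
Your argument is correct. The paper does not supply its own proof of this lemma; it merely cites it from \citet{Roberts69}. Your induction---removing a non-adjacent pair $u,v$, extending the inherited interval representations by making $u$ and $v$ universal, and adding one further interval graph that encodes exactly the adjacencies of $u$ and $v$ while keeping the rest complete---is precisely Roberts' classical proof, and the template construction with the common point $2$ is the standard way to realise that final interval graph.
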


Note that \citet{Trotter79} characterised those graphs for which equality holds in \cref{numvertices}. 

%A vertex-colouring of a graph is \emph{acyclic} if adjacent vertices receive distinct colours and every pair of colour classes induces a forest. 
%\citet{EJ13} made the following connection to boxicity. 
%
%\begin{lem}[\citep{EJ13}] 
%\label{acyclic}
%For $k\geq 2$, every acyclically $k$-colourable graph has boxicity at most $k(k-1)$. 
%\end{lem}

A graph is \emph{$k$-degenerate} if every subgraph has a vertex of degree at most $k$. Note that $1$-degenerate graphs (that is, forests) have boxicity at most 2, but $2$-degenerate graphs have unbounded boxicity, since the 1-subdivision of $K_n$ is $2$-degenerate and has boxicity $\Theta(\log\log n)$ \citep{EJ13}. \citet{ACM14} proved the following bound. Throughout this paper, all logarithms are natural unless otherwise indicated. 

%\footnote{Throughout this paper, all logarithms are natural. All leading term constants are made explicit, but little effort is made to optimise them.}

\begin{lem}[\citep{ACM14}] 
\label{degen}
Every $k$-degenerate graph on $n$ vertices 
has boxicity at most $(k + 2)  \ceil{2e \log n}$.
%$\bx(G) \leq \cub(G) \leq (k + 2)  \ceil{2e \log n}$.
\end{lem}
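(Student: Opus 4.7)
The plan is to use Roberts' classical characterisation of boxicity: $\bx(G)\le t$ if and only if there exist $t$ interval supergraphs of $G$ whose common intersection is $G$, equivalently, if and only if every non-edge of $G$ is a non-edge of at least one of those supergraphs. So the goal is to produce $(k+2)\lceil 2e\log n\rceil$ interval supergraphs of $G$ that together separate every non-edge.

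Fix a $k$-degeneracy ordering $\sigma=v_1,\dots,v_n$ of $G$, so each $v_i$ has at most $k$ forward neighbours $N^+(v_i)\subseteq\{v_{i+1},\dots,v_n\}$; write $p(v)$ for the position of $v$ in $\sigma$. I run $T:=\lceil 2e\log n\rceil$ independent randomised trials, each producing $k+2$ interval supergraphs. In trial $t$, sample a uniform independent colouring $\chi_t\colon V(G)\to[k+2]$, and for each colour $c\in[k+2]$ build an interval supergraph $H_{t,c}$ of $G$ by giving vertex $v$ an interval whose endpoints depend on $p(v)$ and on the positions and colours of the neighbours of $v$ in $\sigma$. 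The intervals are designed so that (a) $H_{t,c}\supseteq G$ for every $c$, verified by a case analysis on the colours of an edge's endpoints, and (b) for any non-edge $\{u,v\}$ with $u$ before $v$ in $\sigma$, $H_{t,c}$ separates $u$ from $v$ whenever $\chi_t(v)=c$ and no vertex of $B_{u,v}:=\{w\in N^+(u):p(w)\ge p(v)\}$ is coloured $c$. Crucially, $|B_{u,v}|\le|N^+(u)|\le k$; this is where $k$-degeneracy enters.

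Letting $E_c$ be the event in (b), the events $(E_c)_{c\in[k+2]}$ are pairwise disjoint (each specifies $\chi_t(v)$), so for every non-edge $\{u,v\}$,
\[
\Pr\!\left[\bigcup_{c=1}^{k+2} E_c\right]=\sum_{c=1}^{k+2}\Pr[E_c]=(k+2)\cdot\frac{1}{k+2}\left(\frac{k+1}{k+2}\right)^{|B_{u,v}|}\ge\left(\frac{k+1}{k+2}\right)^k\ge\frac{1}{e},
\]
using the elementary inequality $(1-1/(k+2))^k\ge\exp(-k/(k+1))\ge 1/e$. Hence a given non-edge is separated in a single trial with probability at least $1/e$. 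By independence across $T$ trials and a union bound over the $\binom{n}{2}<n^2/2$ non-edges,
\[
\Pr[\text{some non-edge is never separated}]\le \tfrac{n^2}{2}\bigl(1-\tfrac{1}{e}\bigr)^T\le \tfrac{n^2}{2}\exp(-T/e)\le \tfrac{1}{2}<1
\]
for $T\ge 2e\log n$. Some realisation therefore produces $(k+2)T$ interval supergraphs whose intersection is exactly $G$, yielding $\bx(G)\le(k+2)\lceil 2e\log n\rceil$.

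The main obstacle is the explicit interval design of $H_{t,c}$ meeting both (a) and (b): the right endpoint of $v$'s interval must reach far enough to include every forward neighbour of $v$ (preserving all edges of $G$) yet be small enough on the event $E_c$ to fall short of $v$'s colour-$c$ forward non-neighbours (delivering the separation in (b)). The natural attempt --- extending $v$'s interval only to the farthest colour-$c$ forward neighbour --- satisfies (b) but breaks (a) for edges whose endpoints both avoid colour $c$; a suitable remedy is to add a minimal ``safety'' extension that still covers all forward neighbours of $v$ while not interfering with the disjointness condition under~$E_c$.
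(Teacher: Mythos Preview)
This lemma is quoted from \citep{ACM14} without proof in the present paper, so there is no in-paper argument to compare against; I assess your attempt on its own merits.

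Your probabilistic framework is correct: the degeneracy ordering, the random $(k{+}2)$-colourings, the disjoint events $E_c$, the bound $\Pr[\bigcup_c E_c]\ge(1-1/(k+2))^k\ge 1/e$, and the union bound over $\lceil 2e\log n\rceil$ independent trials all work and do deliver the claimed bound \emph{provided} interval supergraphs $H_{t,c}$ with properties (a) and (b) exist. But that existence is exactly what you have not established, and your closing paragraph points in the wrong direction. You propose a ``safety extension'' so that $u$'s interval ``still covers all forward neighbours''; however, if the right endpoint of $u$'s interval reaches every $w\in N^+(u)$ and some such $w$ satisfies $p(w)>p(v)$, then $u$'s interval already contains $p(v)$, which must also lie in $v$'s interval---so the two intersect regardless of the colouring, and (b) fails. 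No forward extension of right endpoints can reconcile (a) with (b).

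The actual remedy moves \emph{left} endpoints, not right ones: set
\[
\ell_c(w)=\begin{cases}p(w)&\text{if }\chi_t(w)=c,\\0&\text{otherwise,}\end{cases}
\qquad
r_c(w)=\max\Bigl(\{p(w)\}\cup\{p(z):z\in N^+(w),\ \chi_t(z)=c\}\Bigr).
\]
A four-case check on the colours of an edge's endpoints shows $H_{t,c}\supseteq G$ (two non-colour-$c$ vertices meet at $0$; a colour-$c$ forward neighbour pushes the right endpoint far enough; a non-colour-$c$ later endpoint has left endpoint $0$). Under $E_c$, every colour-$c$ forward neighbour of $u$ lies strictly before $v$, so $r_c(u)<p(v)=\ell_c(v)$ and the intervals are disjoint. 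With this construction your argument is complete; without it, the proof has a genuine gap.
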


%The following lemma, due to  \citet{KT12}, is the starting point for our work on embedded graphs. 
The following lemma, due to  \citet{Esp17}, is the starting point for our work on embedded graphs. 

%\begin{lem}[\citep{KT12}] 
%\label{cutting}
%Every graph $G$ with Euler genus $g$ has a set $X$ of at most $1000g$ vertices such that $G -X$ is acyclically 7-colorable.
%\end{lem}

\begin{lem}[\citep{Esp17}] 
\label{newcutting}
Every graph $G$ with Euler genus $g$ has a set $X$ of at most $60g$ vertices such that $G -X$ has boxicity at most 5. 
\end{lem}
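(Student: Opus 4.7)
My approach is to reduce $G$ to a planar graph by deleting a vertex set of size at most $60g$, then apply Thomassen's theorem that planar graphs have boxicity at most $3$, which is well within the target of~$5$. Accordingly, I would aim to produce $X \subseteq V(G)$ with $|X|\leq 60g$ such that $G-X$ is planar.

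Fix a $2$-cell embedding of $G$ on a surface $\Sigma$ of Euler genus~$g$. The plan is to realize a topological \emph{cut system} of $\Sigma$ as cycles in $G$: a family of simple cycles $C_1,\dots,C_k$ in~$G$, with $k\leq 2g$, whose removal from $\Sigma$ yields an open disk. If I can arrange that $|V(C_1)\cup\cdots\cup V(C_k)|\leq 60g$, then setting $X$ to be this union makes $G-X$ embeddable in a disk, hence planar.

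To build the $C_i$, I would use the tree--cotree decomposition: choose a spanning tree $T$ of $G$, then a spanning tree $T^\ast$ of the dual graph $G^\ast$ using only edges whose primal edge lies outside $T$. The remaining ``chord'' edges number at most $2g$, and each chord together with the $T$-path between its endpoints is a non-contractible simple cycle; collectively these cycles form a valid cut system. The cycles can in principle be very long, so the technical core is a \emph{shortening} step: repeatedly replace a long arc of some $C_i$ by a homotopic shortcut via surgery on the embedded graph, driving each cycle's vertex count down to at most $30$ (yielding the $60g$ bound from $2g$ cycles). If shortening stalls for some $C_i$, every homotopic representative in $G$ is long; then a BFS-peeling argument locates a short non-contractible cycle whose removal reduces the Euler genus by at least one, enabling induction on~$g$.

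The main obstacle is the shortening step: bounding the total size of a cut system in an arbitrary embedded graph requires delicate topological surgery, and squeezing the constant down to $30$ per cycle (hence $60g$ in total) will likely need a careful face-counting or systolic argument paired with the induction on genus to handle graphs of large edge-width. Once $X$ with $|X|\leq 60g$ is in hand and $G-X$ is planar, Thomassen's theorem completes the proof by giving boxicity $\leq 3 \leq 5$.
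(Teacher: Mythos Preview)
Your approach has a fundamental gap: the intermediate goal of finding $X$ with $|X|\le 60g$ such that $G-X$ is \emph{planar} is unachievable in general. Take the toroidal grid $C_n\times C_n$, which has Euler genus~$2$. Every non-contractible cycle in its standard embedding has length at least $n$, so your shortening step cannot push any cut-system cycle below $n$ vertices, let alone~$30$; and your fallback (``a BFS-peeling argument locates a short non-contractible cycle'') fails for the same reason, since the edge-width of this embedding is $n$ and there is no short non-contractible cycle of \emph{any} homotopy class to find. More directly, if $|S|\le n-5$ then $C_n\times C_n - S$ retains at least five intact rows and five intact columns, whose union contains a subdivision of $C_5\times C_5$; as $C_5\times C_5$ has $25$ vertices, $50$ edges, and girth~$4$, it violates the bound $|E|\le 2|V|-4$ for planar graphs of girth at least~$4$ and is therefore non-planar. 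Hence planarising $C_n\times C_n$ requires deleting more than $n-5$ vertices, which exceeds $60g=120$ for every $n\ge 126$.

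This is exactly why the lemma promises only $\bx(G-X)\le 5$ rather than~$3$. The statement is quoted from Esperet and is not proved in the present paper; his argument does \emph{not} make $G-X$ planar. Roughly, one removes a bounded number of vertices per unit of genus (by cutting along short non-contractible curves while they exist) until the remaining embedded graph has large edge-width, and then a separate structural argument bounds the boxicity of that ``locally planar'' graph by~$5$ directly, with no appeal to planarity. The substance of the lemma lies precisely in that large-edge-width case, which your proposal does not address at all.
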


Let $[n]:=\{1,2,\dots,n\}$. For our purposes, a \emph{permutation} of a set $X$ is a bijection from $X$ to $[|X|]$. A set $\{\pi_1,\dots,\pi_p\}$ of permutations of a set $X$ is \emph{$r$-suitable} if for every $r$-subset $S$ of $X$ and for every element $x\in S$, there is permutation $\pi_i$ such that $\pi_i(x)<\pi_i(y)$ for all $y\in S\setminus\{x\}$. This definition was introduced by \citet{Dushnik50}; see  \citep{Spencer71a,Colbourn15,CJ17} for further results on suitable sets. 
%scrambling Rad03,Tarui08,Furedi96,
\citet{Spencer71a} attributes the following result  to Hajnal. We include the proof for completeness, and so that the dependence on $k$ is absolutely clear (since Spencer assumed that $k$ is fixed). 

\begin{lem}[\citep{Spencer71a}]\label{scrambling}
For every $k\ge2$ and $n\ge 10^4$ there is a $k$-suitable collection of permutations of size at most $k2^k\log\log n$.
\end{lem}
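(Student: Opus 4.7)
My plan is to prove the bound by an iterative construction that shrinks the effective size of the ground set at each stage. Let $f(k,n)$ denote the minimum size of a $k$-suitable family of permutations of $[n]$. I would aim to establish the recursive estimate
\begin{equation*}
f(k,n) \le f(k, \ceil{n^{1/2}}) + O(k\,2^k),
\end{equation*}
together with a base case $f(k,m) \le O(k\,2^k)$ for $m$ bounded in terms of $k$. Iterating about $\log_2\log_2 n$ times (each application shrinks the effective universe from $n$ to $n^{1/2}$ to $n^{1/4}$ and so on, reaching a bounded size after about $\log\log n$ rounds) yields the stated $k\,2^k\log\log n$ bound; the hypothesis $n \ge 10^4$ is what makes the constants in the iteration clean.

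For the recursive step, I would partition $[n]$ into $m := \ceil{n^{1/2}}$ blocks of size at most $m$. By the inductive hypothesis, pick a $k$-suitable family $\mathcal{F}_0$ of permutations of $[m]$, and lift each $\pi \in \mathcal{F}_0$ to a permutation of $[n]$ by using $\pi$ to order the blocks and an arbitrary fixed permutation within each block. This lifted family handles any pair $(S,x)$ whose $k$ elements lie in pairwise distinct blocks: the set of blocks hit by $S$ has at most $k$ elements of $[m]$, so by $k$-suitability of $\mathcal{F}_0$ some $\pi$ places the block of $x$ first among them, forcing $x$ to precede every other element of $S$ under the corresponding lifted permutation.

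The remaining pairs $(S,x)$ are those in which two or more elements of $S$ lie in a common block of the partition. For these I would augment the family with $O(k\,2^k)$ additional permutations, constructed either by a probabilistic argument whose union bound is now tractable because the bad configurations are much more structured, or by an explicit secondary lifting of the same scheme. The main technical obstacle is keeping the additive cost of this extension at $O(k\,2^k)$: the factor $2^k$ reflects a case analysis over which subset of $S$ is concentrated in a single block, and the factor $k$ reflects the need to handle each choice of the distinguished element $x \in S$; orchestrating these cases so that the additive cost at each recursive level is independent of $n$ is where the argument demands the most care.
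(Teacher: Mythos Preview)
Your approach is quite different from the paper's, which does not recurse on $n$ at all. The paper reduces the problem to $(k-1)$-\emph{scrambling} families: sequences $S_1,\dots,S_M \subseteq [s]$ such that every Boolean combination of any $k-1$ of them is nonempty. A short probabilistic argument shows one may take $M$ of order $e^{s/((k-1)2^{k-1})}$, and an explicit lex-type rule converts such a family into a $k$-suitable family of $s$ permutations of $[2^M]$; solving $2^M \geq n$ for $s$ then gives the bound directly.

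Your plan has a genuine gap precisely at the step you flag as requiring ``the most care'': producing $O(k2^k)$ extra permutations, independent of $n$, to cover the pairs $(S,x)$ in which $x$ shares its block with another element of $S$. Neither of your suggested mechanisms delivers this. The probabilistic route does not become tractable: there are still $n^{k-O(1)}$ such pairs and a uniformly random permutation handles a given one with probability only $1/k$, so a union bound still forces $\Omega(k^2\log n)$ rounds. As for a secondary lifting, observe that when $|S\cap B_x|=j$ with $2\leq j\leq k-1$, you must simultaneously place $x$ first among $j$ elements inside a block of size $\sqrt n$ \emph{and} place $B_x$ first among up to $k-j+1$ blocks; once $k\geq 4$, at least one of these two subproblems is a $3$-suitability problem on a ground set of size $\sqrt n$, which already requires $\Omega(\log\log n)$ permutations. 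Thus the additive cost per level cannot be bounded independently of $n$, and iterating your recursion yields only a bound of the form $(\log\log n)^{\Theta(k)}$ rather than $k2^k\log\log n$. (For $k=3$ your scheme can in fact be completed with four extra permutations per level, so the idea is not without merit --- it simply does not scale to larger $k$.)
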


%\comment{DW: I think Kierstead (?) proved this result is almost best possible. Worth mentioning. Is this \citep{Kierstead96} the right paper?}

\begin{proof}
A sequence $S_1,\dots,S_r$ of subsets of $[s]$ is {\em $t$-scrambling} if for every set $I\subseteq [r]$ with $|I|\le t$ and every $A\subseteq I$, we have
\begin{equation}\label{venn}
\bigcap_{i\in A} S_i \cap\bigcap_{j\in I\setminus A} ([s]\setminus S_j)\ne\emptyset.
\end{equation}
For $s\ge t\ge1$, let $M(s,t)$ be the maximum cardinality of a $t$-scrambling family of subsets of $[s]$.  Note that $M(s,t)$ is monotone increasing in $s$ (and trivially $M(s,t)\le 2^s$).

\begin{claim}
Let $s\ge t\ge 1$.  If $m$ is a positive integer such that 
\begin{equation}\label{wenn}
2^t\binom mt(1-2^{-t})^s<1,
\end{equation} 
then $M(s,t)\ge m$.
\end{claim}

\begin{proof}[Proof of Claim.]
Choose subsets $S_1,\dots,S_m$ of $[s]$ independently and uniformly at random.  For any $t$-set $I\subseteq[m]$ and any $A\subseteq I$, the probability that \eqref{venn} is not satisfied is $(1-2^{-t})^s$.  
There are $\binom mt$ choices for $I$ and then $2^t$ choices for $A$, so taking a union bound, the probability that there is some pair $(I,A)$ such that \eqref{venn} is not satisfed  is at most the left hand side of \eqref{wenn}  
(note that it is enough to consider sets $I$ of size exactly $t$).  Since this is smaller than 1, we are done.
\end{proof}

Given $n$ and $k$, choose $s$ minimal so that $2^{M(s,k-1)}\ge n$.  Let $M:=M(s,k-1)$, let $S_1,\dots,S_M$ be a $(k-1)$-scrambling set of subsets of $[s]$, and let $Q_1,\dots,Q_n$ be distinct subsets of $[M]$.  
We define orders $<_1,\dots,<_s$ on $[n]$ as follows:
for $a,b\in[n]$, let $j(a,b)=\min(Q_a\triangle Q_b)$; then $a<_i b$ if either
\begin{itemize}
\item $i\in S_{j(a,b)}$ and $j(a,b)\in Q_a$; or 
\item $i\not\in S_{j(a,b)}$ and $j(a,b)\in Q_b$. 
\end{itemize}
(Note that if $S_i=[M]$ then this gives the lex order on the $Q_s$, and if $S_i=\emptyset$ it is reverse lex.)

Now $<_1,\dots,<_s$ is a $k$-suitable collection of orders on $[n]$.  This is straightforward, but a little tricky: given a set $B$ of $k$ elements of $[n]$ and $b\in B$, 
let $I:=\{\min (Q_h\triangle Q_b):h\in B\setminus\{ b\}\}$, let $A=I\cap Q_b$, and choose an element $i$ of the intersection
on the left hand side of \eqref{venn}.
Consider the order $<_i$.  It is enough to show that $b<_i h$ for each $h\in B\setminus\{b\}$.  Given $h\in B\setminus\{b\}$, let 
$q=j(b,h)=\min Q_b \triangle Q_h$.  If $q\in Q_b$ then $q\in A$ and so $i\in S_q$, and therefore $b<_i h$;
 if $q\not\in Q_b$ then $q\not\in A$ and so $i\not\in S_q$, and again $b<_i h$.

How big is $s$?  By the choice of $s$ and monotonicity, $M(s-1,k-1)<\log_2n$. The left hand side of \eqref{wenn} is less than
\begin{equation}\label{xenn}
(2em/t)^t\exp(-s2^{-t})=\left(\frac{2em\exp(-s/t2^t)}{t}\right)^t,
\end{equation}
and so 
$$M(s,t)\ge \frac{t}{2e}e^{s/t2^t},$$
as setting $m$ equal to the right hand side of this expression leaves \eqref{xenn} less than 1.
Thus, bounding $M(s-1,k-1)$, we have 
$$\frac{k-1}{2e}\exp\left(\frac{s-1}{(k-1)2^{k-1}}\right)-1\le M(s-1,k-1)\le \log_2n$$
and so
$$s\le 1+(k-1)2^{k-1}\log\left(\frac{2e}{k-1}\log_2(2n)\right),$$
which is at most $k2^k\log\log n$ for $k\ge2$ and $n\ge 10^4$.
\end{proof}

We will also use the Lov\'asz Local Lemma:

\begin{lem}[\citep{EL75}] 
\label{LLL}
Let $E_1,\dots,E_n$ be events in a probability space, each with probability at most $p$ and mutually independent of all but at most $D$ other events. If $4pD\leq 1$ then with positive probability, none of  $E_1,\dots,E_n$ occur. 
\end{lem}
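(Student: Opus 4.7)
The plan is to prove the standard inductive strengthening of the Local Lemma: for every subset $S \subseteq [n]$ and every index $i \notin S$,
\[
\Pr\Bigl[E_i \,\Big|\, \bigcap_{j\in S} \overline{E_j}\Bigr] \leq 2p.
\]
Once this is established, the chain rule gives
\[
\Pr\Bigl[\bigcap_{i=1}^n \overline{E_i}\Bigr] = \prod_{i=1}^n \Pr\Bigl[\overline{E_i} \,\Big|\, \bigcap_{j<i} \overline{E_j}\Bigr] \geq (1-2p)^n > 0,
\]
where positivity uses $2p < 1$, a consequence of $4pD \leq 1$ (and the case $D=0$ of mutually independent events is trivial).

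The inductive claim is proved by induction on $|S|$. The base case $S = \emptyset$ is immediate from $\Pr[E_i] \leq p$. For the inductive step, partition $S = S_1 \sqcup S_2$, where $S_1$ indexes those events among $\{E_j : j \in S\}$ on which $E_i$ depends, so $|S_1| \leq D$, and $E_i$ is mutually independent of $\{E_j : j \in S_2\}$. Write the conditional probability as the ratio
\[
\Pr\Bigl[E_i \,\Big|\, \bigcap_{j\in S} \overline{E_j}\Bigr]
 = \frac{\Pr\bigl[E_i \cap \bigcap_{j\in S_1}\overline{E_j} \,\big|\, \bigcap_{j\in S_2}\overline{E_j}\bigr]}{\Pr\bigl[\bigcap_{j\in S_1}\overline{E_j} \,\big|\, \bigcap_{j\in S_2}\overline{E_j}\bigr]}.
\]
The numerator is at most $\Pr[E_i \mid \bigcap_{j \in S_2}\overline{E_j}] = \Pr[E_i] \leq p$, by independence. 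For the denominator, enumerate $S_1 = \{j_1,\dots,j_r\}$ with $r \leq D$ and expand as a product of conditional probabilities using the chain rule; applying the inductive hypothesis to each factor (all conditioning sets are strictly smaller than $S$) together with a union bound gives a lower bound of $1 - 2pr \geq 1 - 2pD \geq \tfrac{1}{2}$. Hence the ratio is at most $2p$, completing the induction.

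The main obstacle is essentially conceptual rather than computational: one must spot the right quantity to induct on, namely the conditional probability $\Pr[E_i \mid \bigcap_{j \in S}\overline{E_j}]$ (not the unconditional probability), and then use the partition $S = S_1 \sqcup S_2$ to separate the direct bound coming from independence from the recursive bound coming from induction. Everything else is routine manipulation of conditional probabilities and a union bound.
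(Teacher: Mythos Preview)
The paper does not supply a proof of this lemma; it is quoted as a standard tool from \citep{EL75} (the Lov\'asz Local Lemma) and used as a black box. Your argument is the classical inductive proof of the symmetric Local Lemma and is essentially correct. One minor wording quibble: you mention both ``expand as a product using the chain rule'' and ``a union bound'' for the denominator, which are two different (both valid) routes to the bound $\Pr\bigl[\bigcap_{j\in S_1}\overline{E_j}\mid \bigcap_{j\in S_2}\overline{E_j}\bigr]\ge 1-2pD\ge \tfrac12$; pick one and state it cleanly. Also note that your claim ``$2p<1$ follows from $4pD\le1$'' requires $D\ge1$; you handle $D=0$ separately, but strictly speaking that case needs $p<1$ as well (a harmless edge case that never arises in the paper's applications).
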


For a graph $G$ and set $X\subseteq V(G)$, the graph $G[X]$ with vertex set $X$ and edge set $\{vw\in E(G):v,w\in X\}$ is called the subgraph of $G$ \emph{induced} by $X$. Let $G\langle X\rangle$ be the graph obtained from $G$ by adding an edge between every pair of non-adjacent vertices at least one of which is not in $X$. %Note that $G\langle X\rangle[X]\cong G[X]$

\begin{lem}
\label{completify}
$\bx(G\langle X\rangle) = \bx(G[X])$. 
\end{lem}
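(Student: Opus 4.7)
The plan is to prove the equality by establishing both inequalities, each of which follows directly from the definitions.

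For the lower bound $\bx(G[X]) \le \bx(G\langle X\rangle)$, I would observe that $G[X]$ is an induced subgraph of $G\langle X\rangle$: the construction adds edges only between pairs of vertices in which at least one endpoint lies outside $X$, so the adjacencies among vertices of $X$ are unchanged, and in particular $G\langle X\rangle[X] = G[X]$. Boxicity is monotone under induced subgraphs---simply restrict a box representation to the desired subset of vertices, keeping the same coordinates---so the inequality follows.

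For the upper bound $\bx(G\langle X\rangle) \le \bx(G[X])$, I would set $d := \bx(G[X])$, fix a $d$-dimensional box representation $\{B_v : v \in X\}$ of $G[X]$, and extend it by assigning $B_v := \mathbb{R}^d$ to every $v \in V(G) \setminus X$. Three cases confirm that this is a valid representation of $G\langle X\rangle$: for $u, v \in X$ one has $B_u \cap B_v \ne \emptyset$ iff $uv \in E(G[X])$ iff $uv \in E(G\langle X\rangle)$; while for any pair $\{u, v\}$ with at least one endpoint outside $X$, $B_u \cap B_v$ is non-empty (being either the other box or $\mathbb{R}^d$ itself), matching the fact that such pairs are adjacent in $G\langle X\rangle$ by construction.

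There is no real obstacle here. The lemma is essentially the observation that a vertex adjacent to every other vertex costs nothing for boxicity, since it can be realised by the whole ambient space; the edge case $d=0$ (complete graph) is handled in the same way by taking $\mathbb{R}^0$ to be a single point.
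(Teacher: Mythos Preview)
Your proof is correct and follows essentially the same approach as the paper: restrict a box representation of $G\langle X\rangle$ to $X$ for one inequality, and extend a box representation of $G[X]$ by assigning the full space $\mathbb{R}^d$ to each vertex outside $X$ for the other. The paper's argument is slightly terser, but the content is identical.
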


\begin{proof}
Given a $d$-dimensional box-representation of $G\langle X\rangle$, delete the boxes representing the vertices in $V(G)\setminus X$ to obtain a  $d$-dimensional box-representation of $G[X]$. Thus $\bx(G[X]) \leq \bx(G\langle X\rangle)$. Given a $d$-dimensional box-representation of $G[X]$, 
for every vertex $x$ in $V(G)\setminus X$, add a box with interval $\mathbb R$ in every dimension (so that it meets all other boxes). We obtain a  $d$-dimensional box-representation of $G\langle X\rangle$. Thus $ \bx(G\langle X\rangle) \leq \bx(G[X]) $.
 \end{proof}

For a graph $G$ and disjoint sets $X,Y\subseteq V(G)$, the graph $G[X,Y]$ with vertex set $X\cup Y$ and edge set $\{vw\in E(G):v\in X,w\in Y\}$ is called the bipartite subgraph of $G$ \emph{induced} by $X,Y$. For non-adjacent vertices $v\in X$ and $w\in Y$, we say $vw$ is a \emph{non-edge} of $G[X,Y]$. Let $G\langle X,Y\rangle$ be the graph obtained from $G$ by adding an edge between distinct vertices $v$ and $w$ whenever $v,w\in V(G)\setminus X$ or $v,w\in V(G)\setminus Y$.

%\begin{lem}
%\label{bipartitecompletify}
%
%\end{lem}

%\begin{lem}
%Let $G$ be a bipartite graph with bipartition $A,B$, such that every vertex in $A$ has degree at most $r$. 
%Let $G'$ be the graph obtained from $G$ by adding a clique on each of $A$ and $B$. 
%Then $\bx(G') \leq r 2^r \log\log |B|$. 
%\end{lem}
%
%\begin{proof}
%By \cref{scrambling}, there is a set of $r$-scrambling permutations $\pi_1,\dots,\pi_t$ of $B$.
%
%For each linear ordering $\sigma_i$ we introduce two dimensions, as illustrated in \cref{...}.  
%Represent each vertex $w\in B$ by the box with corners $(-\infty,+\infty)$ and $(2\sigma_i(w),2\sigma_i(w))$. 
%For each vertex $v\in A$, if $x$ and $y$ are respectively the leftmost and rightmost neighbours of $v$ in $\sigma_i$, 
%then represent $v$ by the box with corners $(2\sigma_i(x)-1,2\sigma_i(y)+1)$ and $(+\infty,-\infty)$. 
%Observe that $A$ and $B$ are both cliques in this representation. 
%If $vw\in E(G)$ and $v\in A$ and $w\in B$, then the box representing $v$ intersects the box representing $w$. 
%For a non-edge $vw$ with $v\in A$ and $w\in B$, the box representing $v$ intersects the box representing $w$ if and only if $\sigma_i$ catches $vw$. 
%Since for every non-edge $vw$, some $\sigma_i$ does not catch $vw$, 
%the boxes representing $v$ and $w$ do not intersect. Thus
%$\bx(\widehat{G} [A,B]) \leq 2t \leq 2+ 3 (k+1) \log n$. 
%\end{proof}

For the sake of completeness we now establish the relationship between $\bx(\Delta)$ and $\dim(\Delta)$ from \eqref{BoxDim}. In fact, \cref{bxdim} is a slight strengthening. 

\begin{prop}
\label{bxdim}
$\bx(\Delta) \leq \dim(\Delta+1)$. 
\end{prop}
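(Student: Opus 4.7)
The plan is to prove the inequality via a poset construction. Given a graph $G$ with maximum degree $\Delta$, I will exhibit a poset $Q = Q(G)$ whose comparability graph has maximum degree at most $\Delta+1$ and which satisfies $\bx(G) \leq \dim(Q)$; the bound $\bx(\Delta) \leq \dim(\Delta+1)$ then follows.

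I would define $Q$ on the disjoint union $V(G) \sqcup V^*$, where $V^* = \{v^* : v \in V(G)\}$ is a copy of $V(G)$, by declaring $v <_Q w^*$ if and only if $v = w$ or $vw \in E(G)$, with $V(G)$ an antichain of minima and $V^*$ an antichain of maxima. A direct count shows that each $v \in V(G)$ is comparable to exactly $v^*$ and to $w^*$ for $w \in N_G(v)$, and dually for each $v^*$, so the comparability graph of $Q$ has maximum degree at most $1 + \Delta$, as required.

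For the main inequality $\bx(G) \leq \dim(Q)$, fix linear extensions $L_1, \dots, L_d$ with $Q = L_1 \cap \cdots \cap L_d$, and for each $i$ and each $v \in V(G)$ assign the interval $I_v^i := [L_i(v), L_i(v^*)]$, which is well-defined since $v <_Q v^*$. Let $G_i$ be the resulting interval graph on $V(G)$. If $vw \in E(G)$, then $v <_Q w^*$ and $w <_Q v^*$, so $L_i(v) < L_i(w^*)$ and $L_i(w) < L_i(v^*)$ in every $L_i$, which forces $I_v^i \cap I_w^i \neq \emptyset$. If $vw \notin E(G)$ and $v \neq w$, then $v$ and $w^*$ are incomparable in $Q$, so some $L_i$ satisfies $w^* <_{L_i} v$, and then $I_w^i$ lies strictly to the left of $I_v^i$ on the real line. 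Hence $G = G_1 \cap \cdots \cap G_d$, giving $\bx(G) \leq d = \dim(Q) \leq \dim(\Delta+1)$.

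The main design challenge is to keep the comparability-graph maximum degree at $\Delta+1$ rather than larger. A natural first attempt uses the incidence poset on $V(G) \cup E(G)$, augmented with ``cap'' elements $v^*$ above each vertex and its incident edges; this also works but inflates the degree of edge-elements to $4$, giving the weaker bound $\max(4, \Delta+1)$. The direct construction above sidesteps this by encoding adjacency in the relation between $V(G)$ and $V^*$, keeping the bound at $\Delta+1$ uniformly.
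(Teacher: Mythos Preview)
Your proof is correct and is essentially identical to the paper's: the poset $Q$ on $V(G)\sqcup V^*$ with $v<_Q w^*$ iff $v=w$ or $vw\in E(G)$ is exactly the paper's poset $P$ on $V(G)\times\{0,1\}$, and your interval assignment $I_v^i=[L_i(v),L_i(v^*)]$ matches the paper's box $[x_i(v,0),x_i(v,1)]$ coordinate by coordinate. The verification of edges and non-edges is the same in both arguments.
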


\begin{proof}
Let $G$ be a graph with maximum degree $\Delta$. Let $P$ be the poset on $V(G)\times\{0,1\}$ where $(u,i) \preceq_P (v,j)$ if and only if $i=0$ and $j=1$, and $u=v$ or $uv\in E(G)$. Let $d:=\dim(P)$. Let $\preceq_1,\dots,\preceq_d$ be total orders on $V(G)\times\{0,1\}$ whose intersection is $P$. For each vertex $v$ of $G$, let $B_v$ be the box $[x_1(v,0),x_1(v,1)]\times\dots\times [x_d(v,0),x_d(v,1)]$, where  $x_i(v,j)$ is the position of $(v,j)$ in $\preceq_i$. Let $v,w$ be distinct vertices in $G$. If $vw\in E(G)$ then $(v,0),(w,0) \prec_i (v,1),(w,1)$ for each $i\in[d]$, implying that $B_v\cap B_w\neq\emptyset$. If $vw\not\in E(G)$, then $(v,1)\prec_i (w,0)$ for some $i\in[d]$, implying that $(v,0) \prec_i (v,1)\prec_i (w,0) \prec_i (w,1)$ and $B_v\cap B_w=\emptyset$. Thus $\{B_v:v\in V(G)\}$ is a box representation for $G$, and $\bx(G)\leq \dim(P)$. Since the comparability graph of $P$ has maximum degree $\Delta+1$, we have $\bx(\Delta) \leq \dim(\Delta+1)$. 
\end{proof}

The next lemma uses the following characterisation of poset dimension due to \citet{FK86}: Let $P$ be a poset with ground set $X$. Then $\dim(P)$ equals the minimum integer $k$ for which there are total orders $\preceq_1,\dots,\preceq_k$ of $X$, with the property that for all incomparable pairs $(x,y)$ of $P$ there is an order $\preceq_i$ such that $x \prec_i z$ for all $z\in X$ with $y \preceq_P z$. 

\begin{prop}
\label{dimbx}
$\dim(\Delta) \leq 2\bx(\Delta)$.
\end{prop}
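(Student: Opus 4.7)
The plan is to combine a minimum-dimensional box representation of the comparability graph with the Füredi--Kahn characterisation quoted just before the statement. Let $P$ be a poset on ground set $X$ whose comparability graph $G$ has maximum degree at most $\Delta$, and set $d:=\bx(G)\le\bx(\Delta)$. Fix a $d$-dimensional box representation $\{B_v:v\in X\}$ with $B_v=[a_1(v),b_1(v)]\times\dots\times[a_d(v),b_d(v)]$. My goal is to produce $2d$ total orders of $X$ that satisfy the Füredi--Kahn criterion, from which $\dim(P)\le 2d\le 2\bx(\Delta)$ follows.

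For each $i\in[d]$ I would define two total orders on $X$: $\preceq_i^b$ orders the vertices by their right endpoint $b_i(\cdot)$ from smallest to largest (breaking ties arbitrarily by some fixed underlying order of $X$), and $\preceq_i^a$ orders them by their left endpoint $a_i(\cdot)$ from largest to smallest (with the same kind of arbitrary tie-break). This gives $2d$ orders in total.

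Now I verify the Füredi--Kahn condition for an arbitrary incomparable ordered pair $(x,y)$ of $P$. Since $xy\notin E(G)$ we have $B_x\cap B_y=\emptyset$, so there is a coordinate $i$ for which the $i$th intervals are disjoint; by swapping the roles of $x$ and $y$ I may assume $b_i(x)<a_i(y)$. For any $z\in X$ with $y\preceq_P z$, either $z=y$ or $yz\in E(G)$, and in both cases $B_y\cap B_z\ne\emptyset$, so $a_i(y)\le b_i(z)$. Thus $b_i(x)<a_i(y)\le b_i(z)$, which forces $x\prec_i^b z$. So $\preceq_i^b$ handles the ordered pair $(x,y)$. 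For the reverse ordered pair $(y,x)$, I use the order $\preceq_i^a$: any $w$ with $x\preceq_P w$ satisfies $B_x\cap B_w\ne\emptyset$ (or $w=x$), so $a_i(w)\le b_i(x)<a_i(y)$, and hence $y\prec_i^a w$ since smaller $a_i$ means larger in $\preceq_i^a$. This confirms that the $2d$ orders are Füredi--Kahn witnesses for every incomparable pair, proving $\dim(P)\le 2d$.

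There is no real obstacle: the content of the argument is the observation that a single coordinate in which two boxes separate simultaneously controls both of the ordered incomparable pairs, via the right endpoint for one direction and the left endpoint for the other. The neighbourly edges in $G$ of $y$ (respectively $x$) pin the relevant endpoint of each neighbour on the correct side of the separating gap, which is exactly what the Füredi--Kahn criterion requires.
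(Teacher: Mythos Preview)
Your proof is correct and essentially identical to the paper's: both take a $d$-dimensional box representation of the comparability graph, define two total orders per coordinate (one by increasing right endpoint, one by decreasing left endpoint), and verify the F\"uredi--Kahn criterion using the coordinate in which two non-adjacent boxes separate. The only cosmetic difference is that the paper fixes the ordered pair $(x,y)$ and splits into the two cases $r_i(x)<\ell_i(y)$ versus $r_i(y)<\ell_i(x)$, whereas you normalise to $b_i(x)<a_i(y)$ and then treat both ordered pairs $(x,y)$ and $(y,x)$ simultaneously; the content is the same.
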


\begin{proof}
Let $P$ be a poset, such that comparability graph $G$ of $P$ has maximum degree $\Delta$. Let $\{ [\ell_1(x),r_1(x)] \times\dots\times [\ell_d(x),r_d(x)] :x\in V(G)\}$ be a $d$-dimensional box representation of $G$, where $d:=\bx(G)\leq\bx(\Delta)$. For $i\in[d]$, let $\preceq'_i$ be the total order on $X$ determined by $(\ell_i(x):x\in X)$ ordered right-to-left, and let $\preceq''_i$ be the total order on $X$ determined by $(r_i(x):x\in X)$ ordered left-to-right. Let $(x,y)$ be an incomparable pair of $P$. Thus $xy\not\in E(G)$, and $r_i(x) < \ell_i(y)$ or $r_i(y) < \ell_i(x)$ for some $i\in[d]$. 
First suppose that $r_i(x) < \ell_i(y)$. Consider $z\in X$ such that $y\preceq_P z$. Thus $y=z$ or $yz\in E(G)$, 
implying  $r_i(z) \geq \ell_i(y)$. Hence $r_i(x) < \ell_i(y) \leq r_i(z)$. By construction, $x \prec''_i z$ as desired. 
Now assume that $r_i(y) < \ell_i(x)$. Consider $z\in X$ such that $y\preceq_P z$. Thus $y=z$ or $yz\in E(G)$, implying  $r_i(y) \geq \ell_i(z)$. Hence $\ell_i(z) \leq r_i(y) < \ell_i(x)$. By construction, $x \prec'_i z$, as desired.  By the above characterisation, $\dim(P) \leq 2d = 2\bx(G)$, and $\dim(\Delta) \leq 2\bx(\Delta)$.
\end{proof}

%%%%%%%%%%%%%%%%%%%%%%%%%		
\section{Bounded Degree}
\label{boundeddegree}

The first ingredient in our proof is the following colouring result that bounds the number of monochromatic neighbours of each vertex. A very similar result was proved by \citet{HMR97}; they required the additional property that the 
colouring is proper, but had $k=\max\{(d+1)\Delta,e^3 \Delta^{1+1/d}/d\}$, which is too much for our purposes.

\begin{lem}
\label{StepZero}
For every graph $G$ with maximum degree $\Delta>0$ and for all integers $d\geq 1$ and $k\geq  \frac{(4d+4)^{1/d}e}{d} \Delta^{1+1/d}$,  there is a partition $V_1,\dots,V_k$ of $V(G)$, such that $|N_G(v)\cap V_i| \leq d$ for each $v\in V(G)$ and $i\in[k]$. 
\end{lem}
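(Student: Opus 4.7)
The plan is to colour each vertex with a uniform and independent colour in $[k]$ and invoke the Lov\'asz Local Lemma (\cref{LLL}) to show that with positive probability no neighbourhood $N_G(v)$ contains a monochromatic $(d+1)$-subset; setting $V_i$ to be the vertices of colour $i$ then gives the conclusion.

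Concretely, for each vertex $v\in V(G)$ and each $(d+1)$-subset $S\subseteq N_G(v)$, I would introduce the bad event
\[
F_{v,S} := \{\text{all vertices of } S \text{ receive the same colour}\}.
\]
Since $F_{v,S}$ depends only on the $d+1$ colours assigned to vertices of $S$, one immediately gets $\Pr[F_{v,S}]=k\cdot k^{-(d+1)}=k^{-d}=:p$. For the dependency degree, $F_{v,S}$ is mutually independent of every event $F_{u,T}$ with $T\cap S=\emptyset$. For any fixed $w\in S$, the number of pairs $(u,T)$ with $w\in T\subseteq N_G(u)$ is at most $\Delta\binom{\Delta-1}{d}$ (choose $u\in N_G(w)$ in at most $\Delta$ ways, then pick the remaining $d$ elements of $T$ from $N_G(u)\setminus\{w\}$). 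Summing over the $d+1$ elements of $S$ gives $D\le(d+1)\Delta\binom{\Delta-1}{d}$.

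To check the hypothesis $4pD\le1$ of \cref{LLL}, the plan is to use $d!\ge(d/e)^d$ to bound $\binom{\Delta-1}{d}\le(e\Delta/d)^d$, giving
\[
4pD \le \frac{4(d+1)\Delta\,(e\Delta/d)^d}{k^d} = \frac{(4d+4)(e/d)^d\Delta^{d+1}}{k^d}.
\]
The stated lower bound $k\ge\frac{(4d+4)^{1/d}e}{d}\Delta^{1+1/d}$ is exactly the condition that makes this quantity at most $1$, so \cref{LLL} produces, with positive probability, a colouring in which no $F_{v,S}$ occurs, as required.

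The main subtlety I foresee is the choice of bad event. The naive events ``$|N_G(v)\cap V_i|\ge d+1$'' depend on all of $N_G(v)$ and are indexed by the target class $i$, which inflates the dependency degree by roughly a factor of $k\Delta$ and would only yield $k\gtrsim\Delta^{1+3/d}$. Refining to the events $F_{v,S}$, whose dependence is driven purely by the $d+1$ vertices of $S$, is what cuts $D$ down to the right order and recovers the exponent $1+1/d$ in the hypothesis.
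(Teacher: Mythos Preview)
Your proposal is correct and essentially identical to the paper's proof: random uniform colouring, bad events indexed by monochromatic $(d+1)$-subsets of a neighbourhood, the same dependency bound $D\le(d+1)\Delta\binom{\Delta-1}{d}\le(d+1)(e/d)^d\Delta^{d+1}$, and the same verification that $4pD\le1$. The only cosmetic difference is that the paper indexes events by the set $S$ alone rather than the pair $(v,S)$, but since $F_{v,S}$ depends only on $S$ this changes nothing.
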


\begin{proof} 
Colour each vertex of $G$ independently and randomly with one of $k$ colours. 
Let $V_1,\dots,V_k$ be the corresponding colour classes. 
For each set $S$ of exactly $d+1$ vertices in $G$, such that $S\subseteq N_G(v)$ for some vertex $v\in V(G)$, introduce an event which holds if only if $S\subseteq V_i$ for some $i\in[k]$. Each such event has probability $p:=k^{-d}$. The colour on one vertex affects  at most $\Delta\binom{\Delta-1}{d}$  events. Thus each event is mutually independent of all but at most $D$ other events, where 
\begin{equation*}
D:=(d+1) \Delta\binom{\Delta-1}{d}\leq (d+1) \Delta\left( \frac{e \Delta}{d} \right)^d 
= (d+1) \left( \frac{e}{d} \right)^d \Delta^{d+1}.
\end{equation*}
It follows that $4pD\leq 1$. By \cref{LLL}, with positive probability, no event occurs. Thus the desired partition exists.  
\end{proof}

Note that an example in \citep{HMR97}, due to Noga Alon, shows that the value of $k$ in \cref{StepZero} is within a constant factor of optimal. 
%As an aside, we now show that for fixed $d$, the number of colours in \cref{StepZero} is within a constant factor of optimal. Let $G$ be the graph with vertex set $[n]^{d+1}$, where $vw\in E(G)$ if and only if $v$ and $w$ have at least one coordinate in common. Thus $\Delta(G) \leq (d+1)n^d$. Consider a $k$-colouring of $G$ in which each colour is assigned to at most $d$ neighbours of each vertex. Suppose that distinct vertices $v_1,\dots,v_{d+1}$ are assigned the same colour. Let $w$ be the vertex whose $i$-th coordinate equals the $i$-th coordinate of $v_i$. Thus $v_1,\dots,v_{d+1}\in N_G(w)$, which is a contradiction. Thus at most $d$ vertices are assigned the same colour, and the number of colours is at least $n^{d+1}/d\geq (\Delta/(d+1))^{(d+1)/d}/d$. 
\cref{StepZero}  leads to our next lemma. A similar result was used by \citet{FK86} in their work on poset dimension. 
%ABCMR15

\begin{cor}
\label{StepOne}
For every graph $G$ with maximum degree $\Delta\geq 2$ and for all integers $d\geq 100 \log\Delta$ and  $k\geq \frac{3\Delta}{d}$,  there is a partition $V_1,\dots,V_k$ of $V(G)$, such that $|N_G(v)\cap V_i| \leq d$ for each $v\in V(G)$ and $i\in[k]$. 
\end{cor}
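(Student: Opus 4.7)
The plan is to deduce \cref{StepOne} directly from \cref{StepZero} by showing that the hypotheses of the corollary imply the hypothesis of the lemma. Concretely, I need to verify that when $d\ge 100\log\Delta$ and $k\ge\frac{3\Delta}{d}$, we automatically have $k\ge \frac{(4d+4)^{1/d}e}{d}\Delta^{1+1/d}$, so that \cref{StepZero} applies and produces the required partition.

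After cancelling a factor of $\Delta/d$, this reduces to the single numerical inequality
\begin{equation*}
(4d+4)^{1/d}\cdot e\cdot \Delta^{1/d} \;\le\; 3.
\end{equation*}
The factor $\Delta^{1/d}$ is easy: the hypothesis $d\ge 100\log\Delta$ gives $\Delta^{1/d}=\exp(\log\Delta/d)\le e^{1/100}$. So it suffices to check that $(4d+4)^{1/d}\le 3 e^{-1-1/100}$, equivalently $\log(4d+4)\le(\log 3-1-\tfrac{1}{100})\,d$, which is an inequality purely in $d$.

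To control the remaining factor $(4d+4)^{1/d}$, I would first note that since $\Delta\ge 2$ and $d$ is an integer with $d\ge 100\log\Delta\ge 100\log 2>69$, in fact $d\ge 70$. Then I would observe that $f(d):=\log(4d+4)/d$ is decreasing for $d\ge 1$ (its derivative $\bigl(d/(d+1)-\log(4d+4)\bigr)/d^2$ is negative), so $(4d+4)^{1/d}$ is maximised at $d=70$; a one-line numerical check confirms $(284)^{1/70}\cdot e\cdot e^{1/100}<3$, which gives the inequality for all admissible $d$.

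The only obstacle is the sharpness of this numerical check: the product is roughly $2.97$, uncomfortably close to the threshold $3$, so the bound $d\ge 100\log\Delta$ and the constant $3$ in $k\ge 3\Delta/d$ are chosen precisely to make the arithmetic work. Beyond this single verification, the proof is just an application of \cref{StepZero}.
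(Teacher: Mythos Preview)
Your proposal is correct and follows essentially the same approach as the paper: both reduce the corollary to \cref{StepZero} by verifying the numerical inequality $(4d+4)^{1/d}e\,\Delta^{1/d}\le 3$ using only $d\ge 100\log\Delta$ and $\Delta\ge 2$. The paper organises the arithmetic slightly differently---bounding $(4d+4)^{1/d}e\le 2.95$ first (from $d\ge 69$) and then $\Delta^{1/d}\le 3/2.95$---but this is a cosmetic difference, not a substantive one.
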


\begin{proof}
Since $d\geq 100\log\Delta \geq 69$, we have $(4d+4)^{1/d}e \leq 2.95$ 
and $d\geq \log^{-1}(\frac{3}{2.95})\log\Delta$ 
and $\Delta^{1/d}\leq\frac{3}{2.95}$. Thus
$\frac{(4d+4)^{1/d}e}{d} \Delta^{1+1/d} \leq \frac{2.95}{d} \Delta^{1+1/d} \leq \frac{3\Delta}{d} \leq k$. 
The result follows from \cref{StepZero}.
\end{proof}

%\begin{proof}
%Colour each vertex uniformly at random with one of $k$ colours. 
%Let $V_1,\dots,V_k$ be the corresponding colour classes. For $v\in V(G)$, let $E_{v}$ be the event that 
%$|N_G(v)\cap V_i| \geq d$ for some $i\in[k]$. 
%Then $\mathbb{P}(E_v) = \binom{\deg(v)}{d} k^{1-d} \leq \binom{\Delta}{d} k^{1-d}$.  
%Each event is mutually independent of all but at most $\Delta^2$ other events. 
%Since $d\geq 4 + 3\log \Delta$, we have $32  \Delta^3 < e^d < (\frac{8}{e})^{d} $. 
%Thus $4 e ^d  \Delta^3 < 8^{d-1} $ and $4 e ^d  \Delta^{d+2} < (8 \frac{\Delta}{d}) ^{d-1}  d^d \leq  k^{d-1}  d^d$. 
%Hence  $4\binom{\Delta}{d} k^{1-d} \Delta^2 < 4 (\frac{e \Delta}{d})^d  k^{1-d} \Delta^2 \leq 1$.
%By \cref{LLL}, with positive probability no event $E_v$ occurs. Therefore, the desired colouring exists. 
%\end{proof}

The next lemma is a key new idea in our proof. Its proof is a straightforward application of the Lov\'asz Local Lemma.

\begin{lem}
\label{StepTwo}
Let $G$ be a bipartite graph with bipartition $\{A,B\}$, where vertices in $A$ have degree at most $d$ and vertices in $B$ have degree at most $\Delta$. 
Let $r,t,\ell$ be positive integers such that 
\begin{equation*}
\ell \geq e \left(\frac{ed}{r+1}\right)^{1+1/r} \text{ and } t\geq\log( 4 d \Delta). 
\end{equation*}
Then there exist $t$ colourings $c_1,\dots,c_t$ of $B$, each with $\ell$ colours, such that for each vertex $v\in A$, for some colouring $c_i$, 
each colour is assigned to at most  $r$ neighbours of $v$ under $c_i$. 
\end{lem}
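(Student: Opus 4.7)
The plan is to apply the Lov\'asz Local Lemma (\cref{LLL}) to a straightforward random colouring. For each $i \in [t]$, independently and uniformly colour each vertex of $B$ with one of $\ell$ colours, and let all $t \cdot |B|$ of these random choices be mutually independent. For each $v \in A$, let $E_v$ be the bad event that \emph{in every} colouring $c_i$ some colour appears on more than $r$ neighbours of $v$. We want to show that with positive probability no $E_v$ occurs.

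The first step is to bound $\Pr[E_v]$. Fix $v\in A$ and one colouring $c_i$. Take a union bound over the (at most) $\binom{d}{r+1}$ choices of an $(r+1)$-subset of $N_G(v)$ and over the $\ell$ colours: the probability that some colour is assigned to more than $r$ neighbours of $v$ in $c_i$ is at most
\begin{equation*}
\ell\cdot\binom{d}{r+1}\ell^{-(r+1)} \leq \left(\frac{ed}{r+1}\right)^{r+1}\ell^{-r}.
\end{equation*}
The hypothesis $\ell\geq e\bigl(\tfrac{ed}{r+1}\bigr)^{1+1/r}$ rearranges to $\ell^r\geq e^r\bigl(\tfrac{ed}{r+1}\bigr)^{r+1}$, so this probability is at most $e^{-r}\leq e^{-1}$. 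Since the $t$ colourings use disjoint randomness,
\begin{equation*}
\Pr[E_v]\leq e^{-rt}\leq e^{-t}.
\end{equation*}

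The second step is to bound the dependency degree. The event $E_v$ is determined by the colours assigned in all $t$ rounds to vertices in $N_G(v)$. Thus $E_v$ and $E_{v'}$ are mutually independent whenever $N_G(v)\cap N_G(v')=\emptyset$. Since each vertex of $B$ has at most $\Delta$ neighbours in $A$, and $|N_G(v)|\leq d$, the number of $v'\in A\setminus\{v\}$ sharing a neighbour with $v$ is at most $d\Delta$, so we may take $D=d\Delta$ in the Local Lemma.

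The third step is to check the LLL hypothesis $4pD\leq 1$ with $p=e^{-t}$ and $D=d\Delta$: this amounts to $e^t\geq 4d\Delta$, i.e.\ $t\geq\log(4d\Delta)$, which is precisely the hypothesis on $t$. By \cref{LLL}, with positive probability no $E_v$ holds, so a colouring with the claimed property exists. There is no real obstacle here beyond bookkeeping; the only calculation of substance is verifying that the stated lower bound on $\ell$ is exactly what makes the single-colouring bad probability at most $e^{-1}$, which then matches cleanly with the lower bound on $t$ via the $e^{-t}\cdot 4d\Delta\leq 1$ condition.
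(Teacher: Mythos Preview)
Your proof is correct and follows essentially the same approach as the paper: independent uniform random colourings of $B$, the same bad events $E_v$, the same single-round bound via $\binom{d}{r+1}\ell^{-r}\le\bigl(\tfrac{ed}{r+1}\bigr)^{r+1}\ell^{-r}$, the same dependency bound $d\Delta$, and the same application of \cref{LLL}. You even extract the slightly sharper single-round bound $e^{-r}$ (the paper records only $e^{-1}$), though you then discard the extra factor when matching the hypothesis on $t$.
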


\begin{proof}
For $i\in[t]$ and for each vertex $w\in B$, let $c_i(w)$ be a random colour in $[\ell]$. 
Let $X_v$ be the event that for each $i\in[t]$, some set of $r+1$ neighbours of $v$ are monochromatic under $c_i$. 
The probability that there is a monochromatic set of at least $r+1$ neighbours of $v$ under $c_i$ is 
\begin{equation*}
\binom{\deg(v)}{r+1} \ell^{-r} \leq 
\binom{d}{r+1}\ell^{-r} \leq 
\left( \frac{ed}{r+1} \right)^{r+1}\ell^{-r} \leq e^{-1}.
\end{equation*}
%$$\left( \frac{ed}{r+1} \right)^{r+1}\ell^{-r} \leq e^{-1}$$
%$$e \left( \frac{ed}{r+1} \right)^{r+1} \leq \ell^r$$
%$$e^{1/r} \left( \frac{ed}{r+1} \right)^{1+1/r} \leq \ell$$
Thus $\mathbb{P}(X_v) \leq e^{-t}$. Observe that $X_v$ is mutually independent of all but at most $d\Delta$ other events. 
By assumption, $ 4 e^{-t} d \Delta \leq 1$. 
By \cref{LLL}, with positive probability no event $X_v$ occurs. Therefore, the desired colourings exists. 
\end{proof}

\begin{lem}
\label{unbalancedbipartite}
Let $G$ be a bipartite graph with bipartition $\{A,B\}$, where vertices in $A$ have degree at most $d$ and vertices in $B$ have degree at most $\Delta$, for some $\Delta\geq d\geq 2$. 
Let $G'=G\langle A,B\rangle$ be the graph obtained from $G$ by adding a clique on $A$ and a clique on $B$. 
Then, as $d\to\infty$, 
\begin{equation*}
\bx(G') \leq 
(60 +o(1) ) \,d \,\log( d \Delta)\, \log\log( \Delta ) \,  (2e)^{\sqrt{\log d}} .
\end{equation*}
\end{lem}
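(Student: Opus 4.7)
The plan is to set $r := \lceil\sqrt{\log d}\rceil - 1$, $t := \lceil\log(4d\Delta)\rceil$, and $\ell := \lceil e(ed/(r+1))^{1+1/r}\rceil$, so the hypotheses of \cref{StepTwo} are satisfied. I would apply \cref{StepTwo} to obtain $t$ colourings $c_1,\dots,c_t$ of $B$ into $\ell$ classes each, witnessing that every $a\in A$ has at most $r$ neighbours in each class of some $c_i$. Greedily partition $A = A_1\sqcup\dots\sqcup A_t$ so that $A_i$ collects those $a$ for which $c_i$ is the first good colouring, and set $B_{i,j}:=c_i^{-1}(j)$. Then in the bipartite graph $G[A_i,B_{i,j}]$ each $A_i$-vertex has degree at most $r$ and each $B_{i,j}$-vertex has degree at most $\Delta$.

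Next I would express $G' = \bigcap_{(i,j)\in[t]\times[\ell]} G\langle A_i,B_{i,j}\rangle$: any bipartite non-edge $(a,b)$ of $G$ is reinstated as an edge in every factor except the unique one with $a\in A_i$ and $b\in B_{i,j}$, so the intersection is exactly $G'$. \cref{sumboxicity} then gives $\bx(G')\le\sum_{(i,j)}\bx(G\langle A_i,B_{i,j}\rangle)$, and a bipartite analogue of \cref{completify} (same proof: restrict a representation to $A_i\cup B_{i,j}$; extend by full $\mathbb{R}^d$-boxes elsewhere) identifies each summand with $\bx(F_{i,j})$, where $F_{i,j}$ is the induced subgraph on $A_i\cup B_{i,j}$, namely $G[A_i,B_{i,j}]$ with cliques on $A_i$ and $B_{i,j}$ added. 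To bound $\bx(F_{i,j})$ I would invoke \cref{scrambling} to produce an $(r+1)$-suitable family $\pi_1,\dots,\pi_p$ of permutations of $B_{i,j}$ (including reverses, so the family is also suitable for maxima), of size $p = O\bigl((r+1)\,2^{r+1}\log\log|B_{i,j}|\bigr)$. The $p$-dimensional box representation is built by setting, in coordinate $k$, $I_a^k := (-\infty,\max\{\pi_k(b'):b'\in N_G(a)\cap B_{i,j}\}]$ for $a\in A_i$ and $I_b^k := [\pi_k(b),+\infty)$ for $b\in B_{i,j}$; the $A_i$-boxes share $-\infty$ and the $B_{i,j}$-boxes share $+\infty$, so both cliques are realised in every coordinate; every edge of $G[A_i,B_{i,j}]$ is an intersection in every coordinate; and for any non-edge $(a,b)$, suitability applied to $N_G(a)\cup\{b\}$ (of size at most $r+1$) yields some coordinate $k$ with $\pi_k(b)$ exceeding every $\pi_k(b')$ for $b'\in N_G(a)$, hence $I_a^k\cap I_b^k = \emptyset$.

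The hard part will be removing the dependence on $|B_{i,j}|$: \cref{scrambling} yields $\log\log|B_{i,j}|$ whereas the target has $\log\log\Delta$. I would handle this by a preliminary decomposition---another application of \cref{sumboxicity} splitting $B$ into blocks of size polynomially bounded in $\Delta$, combined with a false-twin reduction on $F_{i,j}$---chosen so that the extra factor introduced by the decomposition does not inflate the final bound. Once this is in place, multiplying everything together and using $r+1\sim\sqrt{\log d}$, so that $(r+1)\,2^{r+1}\sim\sqrt{\log d}\,2^{\sqrt{\log d}}$ and $(ed/(r+1))^{1+1/r}\sim(ed/\sqrt{\log d})\,e^{\sqrt{\log d}}$, the product $t\ell\cdot p$ simplifies to $(60+o(1))\,d\log(d\Delta)\log\log\Delta\,(2e)^{\sqrt{\log d}}$, yielding the claim.
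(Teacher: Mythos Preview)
Your setup through the decomposition $G' = \bigcap_{i,j} G\langle A_i, B_{i,j}\rangle$ matches the paper, and your one-sided interval representation of $F_{i,j}$ via $(r+1)$-suitable permutations of $B_{i,j}$ is correct as far as it goes. But the gap you flag as ``the hard part'' is real, and your proposed fixes do not close it. Splitting $B$ into blocks of size $\mathrm{poly}(\Delta)$ multiplies the bound by the number of blocks, which depends on $|B|$ and is unbounded. A twin reduction on $F_{i,j}$ does not help either: after merging $B_{i,j}$-vertices with identical $A_i$-neighbourhoods, the number of survivors can still be as large as $r\,|A_i|$ (take $r=1$ and a perfect matching between $A_i$ and $B_{i,j}$; every $b$ has a distinct singleton neighbourhood), and $|A_i|$ is not bounded in terms of $d,\Delta$.

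The paper's resolution is a different idea and is the crux of the proof. Instead of taking suitable permutations of $B_{i,j}$, one first colours $B_{i,j}$ using the auxiliary graph $H$ on $B_{i,j}$ in which two vertices are adjacent whenever they share a common neighbour in $A_i$. Since each $b\in B_{i,j}$ has at most $\Delta$ neighbours in $A_i$, each of which has at most $r$ neighbours in $B_{i,j}$, the graph $H$ has maximum degree at most $r\Delta$, so $\chi(H)\le h:=r\Delta+1$. The point of this colouring is that every $a\in A_i$ has its (at most $r$) neighbours in $B_{i,j}$ lying in \emph{distinct} colour classes $X_1,\dots,X_h$. One now takes an $(r+1)$-suitable family of permutations of $[h]$---so that $\log\log h\sim\log\log\Delta$, as required---and lifts each permutation $\pi$ to two linear orders of $B_{i,j}$ by concatenating $X_{\pi(1)},\dots,X_{\pi(h)}$, once with each $X_q$ ordered forward and once backward. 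The suitable permutation separates a non-neighbour $z\in X_{q'}$ of $a$ from the $\le r$ classes meeting $N(a)$; the forward/backward pair handles the residual case where $q'$ itself contains a (unique) neighbour of $a$. This yields a $4p$-dimensional representation of $G\langle A_i,B_{i,j}\rangle$ with $p\le (r+1)2^{r+1}\log\log(r\Delta+1)$, and the product $4t\ell p$ then gives the stated bound.
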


\begin{proof}
Let $r:= \ceil{ \sqrt{\log d}}$ and $\ell:=\ceil{ e \left(\frac{ed}{r+1}\right)^{1+1/r}}$  and $t:= \ceil{ \log( 4 d \Delta)}$.  As $d\to\infty$, we may assume that $d$ is large.

% V(G) \ V_i = A
% V_i = B

By \cref{StepTwo}, there exist $t$ colourings $c_1,\dots,c_t$ of $B$, each with $\ell$ colours, such that for each vertex $v\in A$, 
for some colouring $c_j$, each colour is assigned to at most $r$ neighbours of $v$. Let $\{A_j : j \in[t] \}$ be a partition of $A$ such that for each $v\in A_j$, at most $r$ neighbours of $v$ are assigned the same colour under $c_j$.  Assume that $[\ell]$ is the set of colours used by each $c_j$. 

Our aim is to construct a box representation for each of the graphs $G\langle A_i,B\rangle$, and then take their intersection using \cref{sumboxicity}. In light of  \cref{completify}, it is enough to concentrate on the subgraphs $G[A_i\cup B]$.   To handle $G[A_i\cup B]$, we further decompose $B$  according to $c_1,\dots,c_t$ as follows. For each $j\in[t]$ and each colour $\alpha\in[\ell]$, let $B_{j,\alpha}:= \{ w \in B : c_j(w) = \alpha \}$. 
Let $G_{j,\alpha} := G\langle A_j,B_{j,\alpha} \rangle$.
Note that $G'= \bigcap_{j,\alpha} G_{j,\alpha}$. 

We now bound the boxicity of $G_{j,\alpha}$. 
Let $H$ be the graph with vertex set $B_{j,\alpha}$, where distinct vertices $x,y\in B_{j,\alpha}$ are adjacent in $H$ whenever $x$ and $y$ have a common neighbour in $A_j$. 
Since each vertex in $A_j$ has at most $r$ neighbours in $B_{j,\alpha}$, the graph $H$ has maximum degree at most $r\Delta$. 
Thus $\chi(H) \leq h:= r\Delta+1$. 
Let $X_{1},\dots,X_{h}$ be the colour classes in a proper colouring of $H$. 
For $q\in[h]$, let $\overrightarrow{X_q}$ denote an arbitrary linear ordering of $X_q$. 
Let $\overleftarrow{X_q}$ be the reverse of $\overrightarrow{X_q}$.
Since we may assume that $d$ is large, 
\cref{scrambling} shows that there exists a set of $(r+1)$-suitable permutations $\pi_1,\dots,\pi_p$ of $[h]$ for some $p\leq (r+1)2^{r+1}\log\log(h)$. 

For each $a\in[p]$, we introduce two 2-dimensional representations of $G_{j,\alpha}$. 
Let $\sigma_a$ be the ordering
$\overrightarrow{X_{\pi_a(1)}} , \overrightarrow{X_{\pi_a(2)}} , \dots,\overrightarrow{X_{\pi_a(h)}}$ of $B_{j,\alpha}$. 
Similarly, let $\sigma'_a$  be the ordering
$\overleftarrow{X_{\pi_a(1)}} , \overleftarrow{X_{\pi_a(2)}} , \dots,\overleftarrow{X_{\pi_a(h)}}$ of $B_{j,\alpha}$. 
For each vertex $x$ in $B$, say $x$ is the $b_x$-th vertex in  $\sigma_a$ and $x$ is the $b'_x$-th vertex in  $\sigma'_a$. 
Then represent $x$ by the box with corners $(-\infty,+\infty)$ and $(2b_x,2b_x)$. 
For each vertex $v\in A_j$, 
if $v$ has no neighbours in $B$, then represent $v$ by the  point $(2|B|,-2|B|)$; 
otherwise, if $x$ is the leftmost neighbour of $v$ in $\sigma_a$  and 
$y$ is the rightmost neighbour of $v$ in $\sigma_a$, then represent $v$ by the  box with corners $(\infty,-\infty)$ and $(2 b_x-1,2b_y+1)$, as illustrated in \cref{Gja1}. 
Now, in two new dimensions introduce the following representation. 
Represent each $x$ in $B_{j,\alpha}$ by the box with corners $(-\infty,+\infty)$ and $(2b'_x,2b'_x)$. 
For each vertex $v\in A_j$, 
if $v$ has no neighbours in $B$, then represent $v$ by the  point $(2|B|,-2|B|)$; 
otherwise,  
if $x$ is the leftmost neighbour of $v$ in $\sigma'_a$  and 
$y$ is the rightmost neighbour of $v$ in $\sigma'_a$, 
then represent $v$ by the  box with corners $(\infty,-\infty)$ and $(2 b'_x-1,2b'_y+1)$. 
In each of these four dimensions, add every vertex in 
$V(G)\setminus(B_{j,\alpha}\cup A_j)$ with interval $\mathbb{R}$. 
Observe that $A$ and $B$ are both cliques in this representation. 

%\begin{figure}[ht]
%\includegraphics{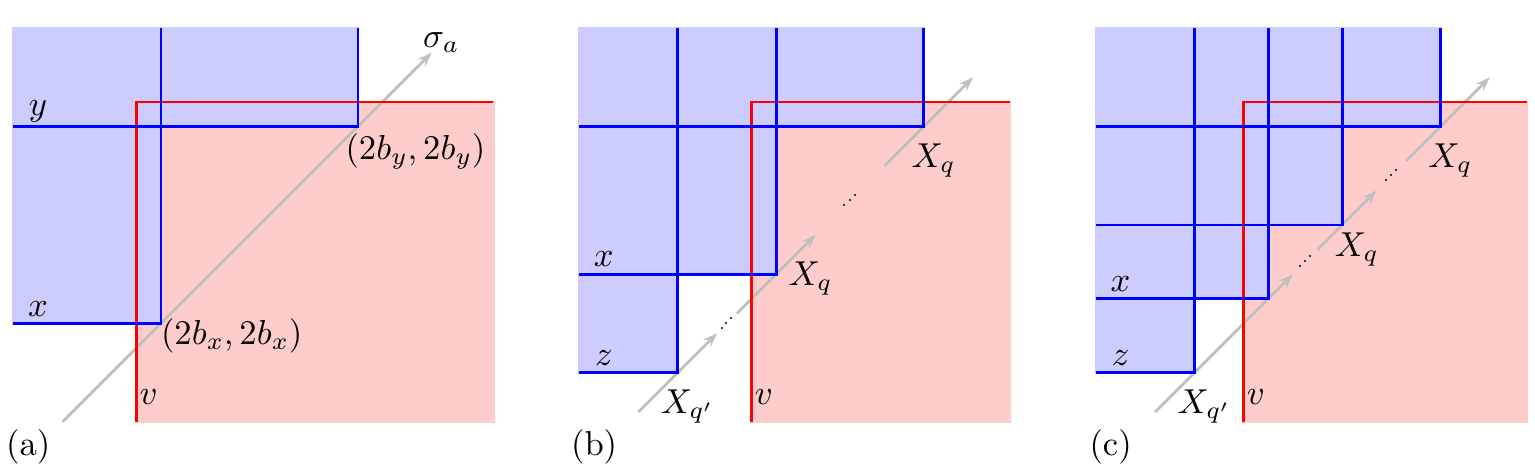}
%\caption{\label{Gja} Representation of $G_{j,\alpha}$ with respect to $\sigma_a$.}
%\end{figure}
%
\begin{figure}[ht]
\includegraphics{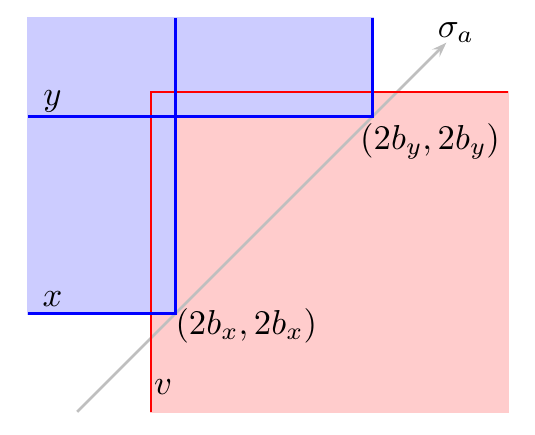}
\caption{\label{Gja1} Representation of $G_{j,\alpha}$ with respect to $\sigma_a$.}
\end{figure}

By construction, for every edge $vw$ of $G_{j,\alpha}$ the boxes of $v$ and $w$ intersect in every dimension. 
Now consider a non-edge $zv$ of $G_{j,\alpha}$ with $z\in B_{j,\alpha}$ and $v\in A_j$. 
Let $C$ be the set of integers $q\in [h]$ such that some neighbour of $v$ is in $X_q$. 
Thus $|C|\leq r$. Say $z$ is in $X_{q'}$. 
First suppose that $q'\not\in C$. 
Since $|C\cup\{q'\}| \leq r+1$, for some permutation $\pi_a$, we have $\pi_a(q') <  \pi_a(q)$ for each $q\in C$. 
Let $x$ be the leftmost neighbour of $v$ in $\sigma_a$. 
Thus $b_z< b_x$, and in the first 2-dimensional representation corresponding to $\pi_a$, 
the right-hand-side of the box representing $z$ is to the left of the left-hand-side of the box representing $v$, as illustrated in \cref{Gja2}(a). 
Thus the boxes representing $v$ and $z$ do not intersect. 
Now assume that $q' \in C$. 
By construction, there is exactly one neighbour $x$ of $v$ in $X_{q'}$. 
Since $|C| \leq r$, for some permutation $\pi_a$, we have $\pi_a(q') \leq \pi_a(q)$ for each $q\in C$. 
If $z<x$ in $\overrightarrow{X_q}$, then $b_z< b_x$, and as argued above and illustrated in \cref{Gja2}(b), 
the boxes representing $v$ and $z$ do not intersect. 
Otherwise,  $z<x$ in $\overleftarrow{X_q}$. Then $b'_z< b'_x$, and 
in the second 2-dimensional representation corresponding to $\pi_a$, 
the right-hand-side of the box representing $z$ is to the left of the left-hand-side of the box representing $v$. 
Hence the boxes representing $v$ and $z$ do not intersect. 
Therefore $\bx(G_{j,\alpha}) \leq 4p$. 

\begin{figure}[ht]
\includegraphics{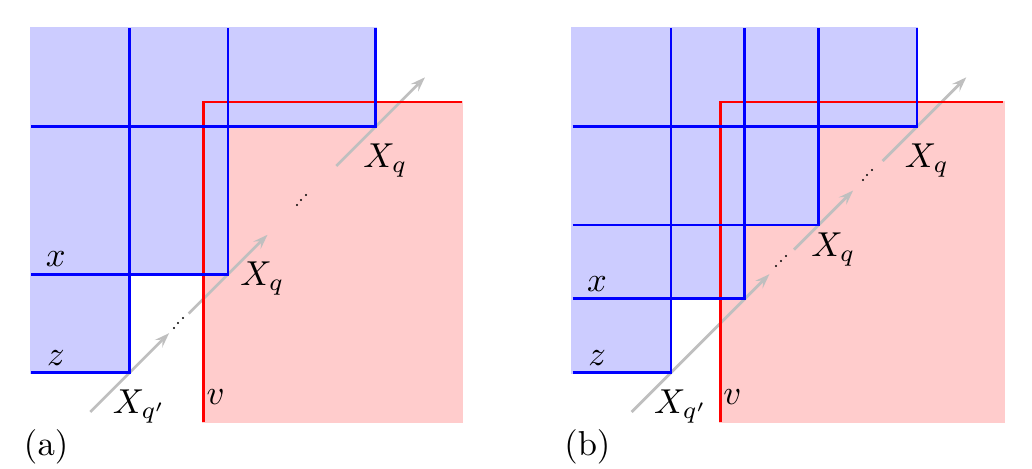}
\caption{\label{Gja2} Proof for the representation of $G_{j,\alpha}$.}
\end{figure}

By \cref{sumboxicity}, 
\begin{align*}
\bx(G')   & \leq 
4 t\ell p\\
&  \leq 
  4 \ceil{ \log( 4 d \Delta)} \, \ceil*{ e \left(\frac{ed}{r+1}\right)^{1+1/r}} \, (r+1)2^{r+1} \, \log\log( r\Delta+1 ).
  \end{align*}
  Since $\log( 4 d \Delta) \leq (1+o(1)) \log(d\Delta)$ and $e^{1+1/r}\leq (1+o(1)) e$ and $\log(r\Delta+1)\leq (1+o(1)) \log(\Delta)$, 
\begin{align*}
\bx(G')  
& \leq  
(8e^2 +o(1) ) \log(d \Delta) \, \left(\frac{d}{r+1}\right)^{1+1/r} \, (r+1)2^{r} \, \log\log( \Delta )\\
& \leq 
(60 +o(1) ) \log(d \Delta) \, \log\log( \Delta )\, d^{1+1/r} \,  \frac{2^{r}}{(r+1)^{1/r}} \\
& \leq 
(60 +o(1) ) \, d \,\log( d\Delta)\, \log\log( \Delta ) \, \left( d^{1/r} \, 2^{r} \right)\\
 & \leq 
(60 +o(1) )\, d \, \log(d \Delta)\, \log\log( \Delta ) \, (2e)^{\sqrt{\log d}} . \qedhere
\end{align*}
\end{proof}

We now prove our first main result.

\begin{thm}
\label{degree}
For every graph $G$ with maximum degree $\Delta$, as $\Delta\to\infty$, 
\begin{equation*}
\bx(G) \leq (180+ o(1) )\, \Delta\log(\Delta)\, (2e)^{\sqrt{\log \log\Delta}} \,\log\log \Delta.
\end{equation*}
\end{thm}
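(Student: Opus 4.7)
The plan is to combine \cref{StepOne} (which partitions $V(G)$ so that every vertex has few neighbours in each part) with \cref{unbalancedbipartite} (which bounds the boxicity of the interaction graph between a low-degree side and a high-degree side). Set $d:=\lceil 100\log\Delta\rceil$ and $k:=\lceil 3\Delta/d\rceil$. By \cref{StepOne}, there is a partition $V_1,\dots,V_k$ of $V(G)$ with $|N_G(v)\cap V_i|\le d$ for every $v\in V(G)$ and every $i\in[k]$.

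Next I would write $G$ as an intersection of simpler graphs. For each $i\in[k]$, let $G^\ast_i:=G\langle V_i,V(G)\setminus V_i\rangle$, obtained from $G$ by turning both $V_i$ and $V(G)\setminus V_i$ into cliques while leaving their bipartite interaction unchanged, and let $\tilde H_i:=G\langle V_i\rangle$, whose only non-edges are the non-edges of $G[V_i]$. A case check on pairs $u,v$ (same part versus different parts) shows
\[
G \;=\; \Bigl(\bigcap_{i=1}^k G^\ast_i\Bigr)\cap\Bigl(\bigcap_{i=1}^k \tilde H_i\Bigr),
\]
because cross-part non-edges of $G$ survive only in the two $G^\ast_i$ corresponding to their parts, whereas within-part non-edges survive only in the matching $\tilde H_i$. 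Therefore \cref{sumboxicity} gives
\[
\bx(G) \;\le\; \sum_{i=1}^k \bx(G^\ast_i) + \sum_{i=1}^k \bx(\tilde H_i).
\]

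For the first sum, the bipartite graph $G[V_i,V(G)\setminus V_i]$ fits the hypotheses of \cref{unbalancedbipartite} with low-degree side $A:=V(G)\setminus V_i$ (each such vertex has at most $d$ neighbours in $V_i$) and high-degree side $B:=V_i$ (degree at most $\Delta$), and $G^\ast_i$ is exactly $G\langle A,B\rangle$ from that lemma. Hence $\bx(G^\ast_i)\le(60+o(1))\,d\,\log(d\Delta)\,\log\log\Delta\,(2e)^{\sqrt{\log d}}$. With $d=\lceil 100\log\Delta\rceil$ we have $\log(d\Delta)=(1+o(1))\log\Delta$, $\log d=(1+o(1))\log\log\Delta$, and $(2e)^{\sqrt{\log d}}=(1+o(1))(2e)^{\sqrt{\log\log\Delta}}$; multiplying by $k\le 3\Delta/d+1$ cancels the factor $d$, yielding
\[
\sum_{i=1}^k \bx(G^\ast_i) \;\le\; (180+o(1))\,\Delta\,\log\Delta\,\log\log\Delta\,(2e)^{\sqrt{\log\log\Delta}}.
\]

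The second sum is a negligible error term. By \cref{completify}, $\bx(\tilde H_i)=\bx(G[V_i])$, and $G[V_i]$ has maximum degree at most $d=O(\log\Delta)$, so I would close by induction on $\Delta$. The inductive hypothesis applied to each $G[V_i]$ gives $\sum_i\bx(\tilde H_i)$ at most $k$ times $O\bigl(d\log d\,\log\log d\,(2e)^{\sqrt{\log\log d}}\bigr)$, which simplifies to $O\bigl(\Delta\,\log\log\Delta\,\log\log\log\Delta\,(2e)^{\sqrt{\log\log\log\Delta}}\bigr)$ and is absorbed into the $o(1)$ factor multiplying the main term. The hard work has already been done inside \cref{unbalancedbipartite}; the only delicate point here is the balancing act of choosing $d\approx\log\Delta$ so that the outer factor $k\approx\Delta/d$ exactly cancels the leading $d$ in the bipartite bound, producing the clean constant $3\cdot 60=180$ at the front.
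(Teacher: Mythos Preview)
Your proposal is correct and follows the same route as the paper: the same choice of $d=\lceil 100\log\Delta\rceil$ and $k=\lceil 3\Delta/d\rceil$, the same decomposition $G=\bigcap_i G\langle V_i\rangle\cap\bigcap_i G\langle V_i,V(G)\setminus V_i\rangle$, and the same application of \cref{unbalancedbipartite} to the bipartite pieces to produce the main term. The only difference is in the negligible within-part term $\sum_i\bx(G[V_i])$: the paper simply quotes Esperet's bound $\bx(G[V_i])\le d^2+2$ (giving $k(d^2+2)=O(\Delta\log\Delta)$, absorbed into the $o(1)$), whereas you invoke induction on $\Delta$; your induction yields a smaller error but is a little awkward to phrase rigorously against an asymptotic $o(1)$ statement, so the direct bound is cleaner.
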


\begin{proof} 
Let $d:= \ceil{100\log\Delta}$ and $k := \ceil{ \frac{3\Delta}{d}}$. 
By \cref{StepOne},  there is a partition $V_1,\dots,V_k$ of $V(G)$, such that $|N_G(v)\cap V_i| \leq d$ for each $v\in V(G)$ and $i\in[k]$. 
Note that 
\begin{equation*}
\bx(G) = \bigcap_i G\langle{V_i}\rangle \cap G\langle{V_i,V(G)\setminus V_i}\rangle .
\end{equation*}
Since $G[V_i]$ has maximum degree at most $d$, by the result of \citet{Esperet09}, the graph $G[V_i]$ has boxicity at most $d^2+2$. 
By \cref{completify}, 
\begin{equation*}
\bx(G\langle V_i\rangle) \leq d^2+2.
\end{equation*}
Let $G_i := G[ V_i, V(G)\setminus V_i]$. 
Every vertex in $V(G)\setminus V_i$ has degree at most $d$ in $G_i$. 
Let $G'_i$ be obtained from $G_i$ by adding a clique on $V_i$ and a clique on $V(G)\setminus V_i$. 
By \cref{unbalancedbipartite,completify} and since $\log(d\Delta)\leq (1+o(1)) \log\Delta$, 
\begin{equation*}
\bx( G\langle V_i,V(G)\setminus V_i\rangle ) \leq 
\bx(G'_i)  \leq  (60 + o(1) )\, d\,  \log(  \Delta) \, \log\log( \Delta )\,  (2e)^{\sqrt{\log d}}  .
\end{equation*}
Applying \cref{sumboxicity} again, 
\begin{align*}
\bx(G)
 & \leq k (d^2+2) + (60 + o(1) )\, k \,d\, \log(  \Delta) \,\log\log( \Delta ) \, (2e)^{\sqrt{\log d}}  \\
 & \leq (9+o(1)) ( \Delta \log \Delta) + (180 + o(1) )\,\Delta \log(\Delta) \,\log\log( \Delta )\, (2e)^{\sqrt{\log \log\Delta}}  \\
 & \leq (180 + o(1) \, \Delta\log(\Delta)\, \log\log( \Delta )\, (2e)^{\sqrt{\log \log\Delta}} .
 \qedhere
\end{align*}
\end{proof}

Since $(2e)^{\sqrt{\log\log\Delta}}\log\log\Delta \leq \log^{o(1)}\Delta$, 
\cref{degree} implies \eqref{degreeeqn}.   More precisely, 
%$$\bx(\Delta)= \Delta (\log \Delta) e^{\Theta(\sqrt{\log\log\Delta})}.$$
$$\bx(\Delta)= \Delta (\log \Delta) e^{O(\sqrt{\log\log\Delta})}.$$

\cref{degree} and the result of \citet{ABC11} mentioned in \cref{Introduction} imply the following quantitative version of \eqref{dimeqn}.

\begin{thm}
\label{dim}
For every poset $P$ whose comparability graph has maximum degree $\Delta$, as $\Delta\to\infty$, 
\begin{equation*}
\dim(P) \leq (360 + o(1) )\, \Delta\log(\Delta)\, (2e)^{\sqrt{\log \log\Delta}} \log\log\Delta.
\end{equation*}
\end{thm}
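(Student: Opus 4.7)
The proof is essentially an immediate combination of the two preceding results, so the plan is short. I would first apply \cref{dimbx} (the Adiga--Bhowmick--Chandran inequality $\dim(\Delta)\le 2\bx(\Delta)$, proved earlier in the paper): if $G$ denotes the comparability graph of $P$, which has maximum degree $\Delta$, then
\begin{equation*}
\dim(P)\le 2\,\bx(G).
\end{equation*}
Note that $\bx(G)\le \bx(\Delta)$ by definition, but here one can simply invoke the bound on $\bx(G)$ itself directly.

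Next I would plug in \cref{degree}, which gives
\begin{equation*}
\bx(G)\le (180+o(1))\,\Delta\log(\Delta)\,(2e)^{\sqrt{\log\log\Delta}}\log\log\Delta
\end{equation*}
as $\Delta\to\infty$. Multiplying by $2$ absorbs into the leading constant to yield $360+o(1)$, giving exactly the claimed bound. The $o(1)$ term in \cref{degree} is an additive term in the leading constant, so doubling it is still an $o(1)$ term, and all other factors are unaffected.

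There is no real obstacle: the work was done in \cref{degree} (the boxicity bound via colouring, suitable permutations, and the Lov\'asz Local Lemma), and in \cref{dimbx} (translating from boxicity to poset dimension). One small sanity check I would record explicitly is that the hypothesis of \cref{degree} applies to $G$ with its actual maximum degree $\Delta$ (not $\Delta+1$ or similar), since $G$ is by assumption the comparability graph of $P$ with maximum degree $\Delta$; and the $\Delta\to\infty$ asymptotic in \cref{degree} transfers verbatim to the $\Delta\to\infty$ asymptotic in the conclusion for $\dim(P)$.
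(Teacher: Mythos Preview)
Your proposal is correct and matches the paper's own argument: the paper simply states that \cref{degree} together with the Adiga--Bhowmick--Chandran inequality (proved here as \cref{dimbx}) yields \cref{dim}, which is exactly the combination you describe. There is nothing to add.
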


Again, with \eqref{DimUpperBound},  this gives
%$$\dim(\Delta)= \Delta (\log \Delta) e^{\Theta(\sqrt{\log\log\Delta})}.$$
$$\dim(\Delta)= \Delta (\log \Delta) e^{O(\sqrt{\log\log\Delta})}.$$

%%%%%%%%%%%%%%%%%%%%%%
\section{Euler Genus}
\label{eulergenus}

We now prove our second main result.

\begin{thm}
\label{genus}
For every graph $G$ with Euler genus $g$, as $g\to\infty$.
  \begin{equation*}
\bx(G)\leq (12 + o(1) ) \sqrt{g\log g}.
\end{equation*}
 \end{thm}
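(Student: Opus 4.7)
The plan is to apply \cref{newcutting} to obtain a ``genus-heavy'' set $X \subseteq V(G)$ with $|X|\leq 60g$ and $\bx(G-X)\leq 5$, and then decompose using a degree threshold $\Delta := \Theta(\sqrt{g/\log g})$. Via the identity $G = G\langle X\rangle \cap G\langle V\setminus X\rangle \cap G\langle X, V\setminus X\rangle$ (checked pair-by-pair on vertices), \cref{sumboxicity} and \cref{completify} give
\[ \bx(G) \leq \bx(G[X]) + 5 + \bx(G\langle X, V\setminus X\rangle), \]
reducing the task to bounding the two non-trivial terms by $O(\sqrt{g\log g})$.

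I would choose $\Delta := c\sqrt{g/\log g}$ for a constant $c$ to be tuned, partition $X = X_H \cup X_L$ by $G[X]$-degree (with $X_H$ the vertices of $G[X]$-degree exceeding $\Delta$), and partition $V\setminus X = Y_H \cup Y_L$ by bipartite degree to $X$ (similarly). Euler's formula for $G[X]$ and the bipartite Euler bound applied to the subgraph $G[X, Y_H]$ (both of Euler genus at most $g$) yield $|X_H|, |Y_H| = O(g/\Delta) = O(\sqrt{g\log g})$. For $\bx(G[X])$, decompose as $G[X]\langle X_H\rangle \cap G[X]\langle X_L\rangle \cap G[X]\langle X_H, X_L\rangle$: the three pieces are bounded respectively by $|X_H|/2$ via \cref{numvertices}; by $(\Delta+2)\ceil{2e\log(60g)} = O(\sqrt{g\log g})$ via \cref{degen} (since $G[X_L]$ is $\Delta$-degenerate on at most $60g$ vertices); and by $|X_H|$ via the direct observation that $\bx(G\langle A, B\rangle) \leq \min(|A|, |B|)$ for any bipartite graph $G$ with parts $A, B$, proved by dedicating one dimension per vertex of the smaller side to separate that vertex from its non-neighbours in the other side (all other vertices receiving the full line).

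The main difficulty is bounding $\bx(G\langle X, V\setminus X\rangle)$. Decomposing further as $G\langle X, Y_H\rangle \cap G\langle X, Y_L\rangle$, the first factor has boxicity at most $|Y_H| = O(\sqrt{g\log g})$ by the direct observation above. The second factor is the crux: $Y_L$-vertices have bipartite degree at most $\Delta$ to $X$, but $X$-vertices may have arbitrary bipartite degree to $Y_L$. A naive application of \cref{unbalancedbipartite} with $d = \Delta$ produces an unwanted $(2e)^{\sqrt{\log\Delta}} = \exp(\Theta(\sqrt{\log g}))$ blow-up, incompatible with the target leading constant. To bypass this, the plan is to refine the bipartite structure further --- for instance, by applying \cref{StepOne} to a carefully chosen auxiliary (sub)graph so that the $d$-parameter in \cref{unbalancedbipartite} can be reduced to $O(\log g)$, or by establishing a direct bipartite boxicity bound tailored to this setting. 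Summing all $O(\sqrt{g\log g})$ contributions and optimising $c$ should then deliver $\bx(G) \leq (12+o(1))\sqrt{g\log g}$; the bipartite analysis of $G\langle X, Y_L\rangle$ is the main technical obstacle.
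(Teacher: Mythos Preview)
Your outline is sound up to the point you yourself flag: the piece $G\langle X,Y_L\rangle$ is a genuine gap, and neither of your suggested fixes closes it. Applying \cref{unbalancedbipartite} with $d=\Delta=\Theta(\sqrt{g/\log g})$ brings in the $(2e)^{\sqrt{\log d}}$ factor you noted, and \cref{StepOne} does not help either, since after partitioning $X$ you would still face bipartite pieces in which the $Y_L$-side is of unbounded size. The underlying obstruction is that any union-bound or Local-Lemma argument over non-edges of $G[X,Y_L]$ must cope with $|Y_L|$ being as large as $|V(G)|$, which is not controlled by $g$.

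The paper sidesteps this by using \emph{two} degree thresholds rather than one. First it splits $V\setminus X$ at the very low threshold~$3$: let $Y$ be the vertices with at most two neighbours in $X$ and $Z$ the rest. The bipartite piece $G\langle X,Y\rangle$ is then almost free --- since each $v\in Y$ has $\deg_X(v)\le 2$, a $3$-suitable family of permutations of $X$ (of size $O(\log\log g)$ by \cref{scrambling}) suffices to separate every non-edge, giving $\bx(G\langle X,Y\rangle)=O(\log\log g)$. Crucially, Euler's formula now forces $|Z|=O(g)$ (each $z\in Z$ contributes at least three edges to the genus-$g$ bipartite graph $G[X,Z]$), so $H:=G[X\cup Z]$ has only $O(g)$ vertices. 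All the ``hard'' work is pushed inside this small graph: one takes a degeneracy ordering of $H$ with threshold $k\approx\sqrt{g/\log g}$, splitting $V(H)=A\cup B$ with $H[A]$ $k$-degenerate (handled by \cref{degen}) and $H[B]$ of minimum degree $\ge k$, hence $|B|=O(g/k)$ (handled by \cref{numvertices}). The remaining bipartite piece $H\langle A,B\rangle$ is dealt with by a direct random-permutation ``catching'' argument: for $t=\Theta(k\log|V(H)|)$ random permutations $\sigma_1,\dots,\sigma_t$ of $B$, the probability that a fixed non-edge $vw$ (with $\deg_B(v)\le k$) is ``caught'' by every $\sigma_i$ is at most $e^{-2t/(k+1)}$, and now the union bound over the $O(g^2)$ non-edges of $H$ succeeds precisely because $|V(H)|=O(g)$. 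This yields $\bx(H\langle A,B\rangle)\le 2t\approx 3k\log g$, and summing the three pieces of $H$ with the chosen $k$ gives the leading constant $12$. Your single-threshold decomposition cannot reach this argument because your analogue of $B$ sits inside $V\setminus X$, which is arbitrarily large.
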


\begin{proof}
%By \cref{cutting}, $G$ contains a set $X$ of at most $1000g$ vertices such that $G -X$ is acyclically 7-colorable. 
%By \cref{acyclic}, $\bx( G-X) \leq 42$. 
By \cref{newcutting}, $G$ contains a set $X$ of at most $60g$ vertices such that $\bx(G -X)\leq 5$.
First suppose that $|X|<10^4$. 
Deleting one vertex reduces boxicity by at most 1. Thus $\bx(G)\le\bx(G-X)+|X| \leq 10^4+5$, and we are done since $g\rightarrow\infty$. 
Now assume that $|X| \geq 10^4$. 
%If $X=\emptyset$ then we are done. Now assume that $X\neq\emptyset$. 

Let  $G_1:=G\langle V(G)\setminus X \rangle$. 
Let $Y$ be the set of vertices in $G-X$ with exactly one or exactly two neighbours in $X$. 
Let $G_2:= \langle X,Y \rangle$. 
%We may assume that $|X|$ is large enough for \cref{scrambling} to apply (if not then, as $g\to\infty$,  note that deleting a vertex reduces boxicity by at most 1, and so $\bx(G)\le\bx(G-X)+|X|$, and we are done). 
By \cref{scrambling}, there is a 3-suitable set of permutations $\pi_1,\dots,\pi_p$ of $X$ for some $p\leq 24 \log\log |X|$. 
For each $\pi_i$ we introduce two dimensions, as illustrated in \cref{G2}.  
Represent each vertex $w\in X$ by the box with corners $(-\infty,+\infty)$ and $(2\pi_i(w),2\pi_i(w))$. 
For each vertex $v\in Y$, 
if $x$ and $y$ are respectively the leftmost and rightmost neighbours of $v$ in $\pi_i$, 
then represent $v$ by the box with corners $(2\pi_i(x)-1,2\pi_i(y)+1)$ and $(+\infty,-\infty)$. 
Observe that $X$ and $Y$ are both cliques in this representation. 
If $vw\in E(G)$ and $v\in Y$ and $w\in X$, then the box representing $v$ intersects the box representing $w$. 
Consider  a non-edge $vz$ with $v\in Y$ and $z\in X$. 
Since $\pi_1,\dots,\pi_p$ is 3-suitable and $\deg_X(v)\leq 2$, for some $i$, we have $\pi_i(z) < \pi(x)$ for each $x\in N_G(v)$. 
Thus, for the 2-dimensional representation defined with respect to $\pi_i$, 
the boxes representing $v$ and $z$ do not intersect. 

\begin{figure}[ht]
\centering
\includegraphics{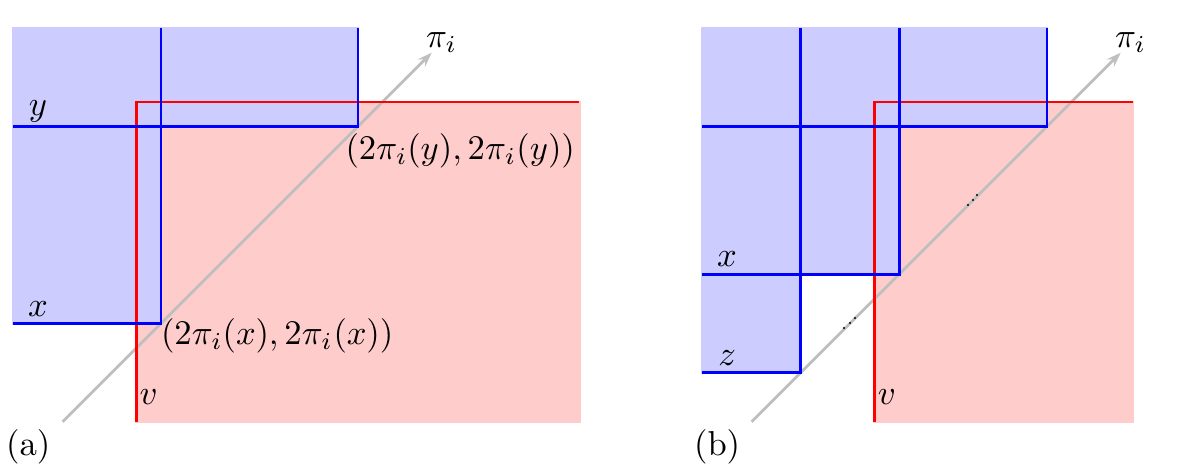}
\caption{\label{G2} Representation of $G_2$ with respect to $\pi_i$.}
\end{figure}

Add each vertex in $G-(X\cup Y)$ to every dimension with interval $\mathbb{R}$. 
We obtain a box representation of $G_2$. 
Thus $\bx(G_2) \leq 48 \log\log |X| \leq 48 \log\log (1000g)$. 

Let $Z$ be the set of vertices in $G-X$ with at least three neighbours in $X$. 
Let $G_3:= G \langle X\cup Z\rangle$.
% be the graph with vertex set $V(G)$, where $vw\in E(G_3)$ whenever $vw\in G[X\cup Z]$ or at least one of $v,w$ is in $V(G)-(X\cup Z)$. 
Observe that $G=G_1\cap G_2\cap G_3$. 

To bound $\bx(G_3)$, we first bound $\bx(H)$, where $H:=G[X\cup Z]$. 
The number of edges in $G[X,Z]$ is least $3|Z|$ and at most $2(|X| + |Z| + g-2)$ by Euler's formula. 
Thus $|Z| < 2(|X| + g)$, implying $|X\cup Z| < 3002g$. 
Let $n:= |V(H)| < 3002g$. 
Let $v_1,\dots,v_n$ be an ordering of $V(H)$, where $v_i$ has minimum degree in $H[\{v_i,\dots,v_n\}]$. 
Define $k:= 7 + \ceil{ \sqrt{g / \log g} }$. 
Let $i$ be minimum such that $v_i$ has degree at least $k$ in $H[\{v_i,\dots,v_n\}]$.
If $i$ is defined, then let $A:=\{v_1,\dots,v_{i-1}\}$ and $B:=\{v_i,\dots,v_n\}$, otherwise let $A:=V(H)$ and $B:=\emptyset$.

Observe that $H= H\langle A\rangle \cap H\langle B\rangle \cap H\langle A,B\rangle $. 

By construction, $H[A]$ is $k$-degenerate and has at most $n$ vertices. 
By \cref{degen}, $\bx(H[A]) \leq (k+2) \ceil{2e \log n}$. 
By \cref{completify}, $H\langle A\rangle \leq (k+2) \ceil{2e \log n}$. 

By construction, $H[B]$ has minimum degree at least $k$. 
The number of edges in $H[B]$ is at least $\frac{1}{2} k|B|$ and at most $3(|B|+g-2)$, implying
$ (\frac{k}{2} -3)  |B| < 3g$. 
By \cref{numvertices}, 
$\bx(H[B]) \leq \frac{|B|}{2} < \frac{3g}{k-6}$. 
By \cref{completify}, $H\langle B\rangle < \frac{3g}{k-6}$. 

Now consider $H[A,B]$. 
By construction, every vertex in $A$ has degree at most $k$ in $H[A,B]$. 
A permutation $\sigma$ of $B$ \emph{catches} a non-edge $vw$ of $H[A,B]$ with $v\in A$ and $w\in B$ if there are edges $vx,vy$ in $H[A,B]$, such that $w$ is between $x$ and $y$ in $\sigma$. 
Let $t:= \ceil{\frac32 (k+1) \log n}$. 
Let $\sigma_1,\dots,\sigma_t$ be random permutations of $B$. 
For each non-edge $vw$ of $H[A,B]$, 
the probability that $\sigma_i$ catches $vw$ equals $1-\frac{2}{\deg(v)+1} \leq e^{-2/(k+1)}$. 
Thus, the probability that every $\sigma_i$ catches $vw$ is at most $e^{-2t/(k+1)} \leq  n^{-3}$. 
Since the number of non-edges is at most $n^2$, by the union bound, 
 the probability that for some non-edge $vw$, every $\sigma_i$ catches $vw$ is at most $n^{-1}<1$. 
Hence, with positive probability, for every non-edge $vw$, some $\sigma_i$ does not catch $vw$.
Therefore, there exists permutations  $\sigma_1,\dots,\sigma_t$ of $B$, such that for every non-edge $vw$, some $\sigma_i$ does not catch $vw$. 

For each permutation $\sigma_i$ we introduce two dimensions. 
Represent each vertex $w\in B$ by the box with corners $(-\infty,+\infty)$ and $(2\sigma_i(w),2\sigma_i(w))$. 
For each vertex $v\in A$, 
if $v$ has no neighbours in $B$ then represent $v$ by the point $(2|B|,-2|B|)$; 
otherwise, if $x$ and $y$ are respectively the leftmost and rightmost neighbours of $v$ in $\sigma_i$, 
then represent $v$ by the box with corners $(2\sigma_i(x)-1,2\sigma_i(y)+1)$ and $(+\infty,-\infty)$. 
Observe that $A$ and $B$ are both cliques in this representation. 
If $vw\in E(G)$ and $v\in A$ and $w\in B$, then the box representing $v$ intersects the box representing $w$. 
For a non-edge $vw$ with $v\in A$ and $w\in B$, the box representing $v$ intersects the box representing $w$ if and only if $\sigma_i$ catches $vw$. 
Since for every non-edge $vw$, some $\sigma_i$ does not catch $vw$, 
the boxes representing $v$ and $w$ do not intersect. Thus
$\bx(H\langle A,B\rangle ) \leq 2t \leq 2+ 3 (k+1) \log n$. 

By \cref{sumboxicity},
\begin{align*}
\bx(H) \leq  & \bx(H\langle A\rangle ) + \bx(H\langle B\rangle ) + \bx( H\langle A,B \rangle ) \\
\leq 
& (k+2) \ceil{2e \log n} + 
\frac{3g}{k-6} + 
2+ 3 (k+1) \log n \\
\leq
& (9k+15)  \log (3002g)  + 
3 \sqrt{g\log g} \\
\leq
& 12 \sqrt{g\log g} + O( \sqrt{g/\log g} ). 
\end{align*}
Applying \cref{sumboxicity} again,  
\begin{align*}
\bx(G) & \leq \bx(G_1) + \bx(G_2) + \bx(G_3) \\
& \leq 42 + 48 \log\log (1000g) +  12 \sqrt{g\log g} + O( \sqrt{g/\log g} ) \\
& \leq 12 \sqrt{g\log g} + O( \sqrt{g/\log g} ).\qedhere
\end{align*}
\end{proof}

As noted in \cref{Introduction}, when combined with the lower bound proved by \citet{Esperet16}, this shows that the maximum possible boxicity of a graph with Euler genus $g$ is $\Theta(\sqrt{g\log g})$.

%%%%%%%%%%%%%%%%%%%%%%%%%%%%%%%%%%
\section{Layered Treewidth}
\label{layeredtreewidth}

A \emph{tree decomposition} of a graph $G$ is a set $(B_x:x\in V(T)$ of non-empty sets $B_x\subseteq V(G)$ (called \emph{bags}) indexed by the nodes of a tree $T$, such that for each vertex $v\in V(G)$, the set $\{x\in V(T):v\in B_x\}$ induces a non-empty (connected) subtree of $T$, and for each edge $vw\in E(G)$ there is a node $x\in V(T)$ such that $v,w\in B_x$. The \emph{width} of a tree decomposition $(B_x:x\in V(T))$ is $\max\{|B_x|-1: x\in V(T)\}$. The \emph{treewidth} of a graph $G$, denoted by $\tw(G)$, is the minimum width of a tree decomposition of $G$. Treewidth is a key parameter in algorithmic and structural graph theory (see \citep{Reed97,Bodlaender-TCS98,HW17} for surveys).
\citet{CS07} proved: 

\begin{thm}[\citep{CS07}] 
\label{tw}
For every graph $G$, 
  \begin{equation*}
\bx(G)\leq \tw(G)+2.
\end{equation*}
\end{thm}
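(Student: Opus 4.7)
The plan is to construct $k+2$ interval graphs whose intersection is $G$, where $k := \tw(G)$, driven by a tree decomposition $(T, (B_x)_{x \in V(T)})$ of $G$ of width $k$ together with a proper $(k+1)$-coloring of the associated chordal completion.

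First I would set up the chordal completion $H$ of $G$ induced by the decomposition: $V(H) = V(G)$ and $uv \in E(H)$ if and only if $u,v$ lie in a common bag. Equivalently, $H$ is the intersection graph of the subtrees $T_v := \{x \in V(T) : v \in B_x\}$, so $H$ is chordal and $\omega(H) \leq k+1$ by the Helly property of subtrees of a tree. Since chordal graphs are perfect, $H$ admits a proper coloring $c : V(G) \to [k+1]$; in particular, any two vertices in a common bag receive different colors, even if they are non-adjacent in $G$. Note that $E(G) \subseteq E(H)$, so the only extra edges of $H$ relative to $G$ are ``fill-in'' edges.

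Next I would construct $k+1$ interval graphs $F_1,\dots,F_{k+1}$, one per color class, plus one additional interval graph $F_0$, all on vertex set $V(G)$. Each $F_i$ is built from a DFS traversal of $T$ together with the coloring $c$, and is designed so that (a) every edge of $G$ is preserved in each $F_i$, and (b) for every non-edge $uv$ of $G$, some $F_j$ witnesses it as a non-edge. For a non-edge $uv$ of $G$ with $uv \notin E(H)$, the subtrees $T_u$ and $T_v$ are disjoint, and the auxiliary interval graph $F_0$ (built directly from a linearization of the tree, using DFS pre-order restricted to an ``anchor'' node of each subtree) captures this disjointness. For a non-edge $uv$ with $uv \in E(H)\setminus E(G)$, one has $c(u) \neq c(v)$, and $F_{c(u)}$ is specifically designed so that vertices of color $i$ sit on the real line in a way that separates them from non-$G$-neighbors of other colors within a common bag. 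By \cref{sumboxicity}, $\bx(G) \le \bx(F_0)+\sum_{i=1}^{k+1}\bx(F_i) = k+2$.

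The main obstacle is devising the interval graphs $F_i$ so that properties (a) and (b) hold simultaneously. Property (a) is delicate because a subtree $T_v$ may meet many other subtrees in different branches of $T$, yet each of the $k+2$ interval graphs must keep $v$'s interval intersecting those of all its $G$-neighbors. Property (b) requires using the color labels to \emph{locally} break the fill-in non-edges inside each bag without disturbing intersections for real edges elsewhere. Concretely, I would anchor each $v$'s interval in $F_i$ at a canonical node of $T_v$ (say its topmost node under a fixed rooting of $T$), use DFS entry/exit times to set endpoints so that intersection encodes an ancestor/descendant relation between anchors, and shrink color-$i$ vertices' intervals just enough that a color-$i$ vertex $u$ and a different-color vertex $v$ sharing a bag have intersecting intervals in $F_i$ only when $uv \in E(G)$. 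A case analysis on whether $uv \in E(H)$ then yields $G = F_0 \cap F_1 \cap \cdots \cap F_{k+1}$, and hence $\bx(G) \leq k+2$.
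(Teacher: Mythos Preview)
The paper does not give its own proof of this statement; it is quoted as a theorem of Chandran and Sivadasan \citep{CS07} and then invoked in the proof of \cref{ltw}. So there is nothing in the paper to compare against, and the relevant question is whether your sketch is sound and how it aligns with \citep{CS07}.

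Your high-level plan---pass to the chordal completion $H$, take a proper $(k{+}1)$-colouring of $H$, and exhibit $G$ as an intersection of $k{+}2$ interval supergraphs---is indeed the Chandran--Sivadasan strategy. The gap is in the role you assign to $F_0$. You ask $F_0$ by itself to witness every non-edge $uv$ with $uv\notin E(H)$, while $F_1,\dots,F_{k+1}$ handle only the fill-in non-edges ``within a common bag''. That forces $G\subseteq F_0\subseteq H$ with $F_0$ an interval graph, and no such $F_0$ need exist: take $G$ to be the subdivision of $K_{1,3}$, a tree, so $\tw(G)=1$ and $H=G$; then $F_0$ would have to equal $G$, but $G$ has an asteroidal triple and is not interval. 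If instead $F_0$ is the DFS entry/exit interval graph on the anchors $r_v$ (as your parenthetical suggests), then $F_0$ is a strict supergraph of $H$: in the same example the centre becomes adjacent to all three far leaves. Those three non-edges share no bag and, in any proper $2$-colouring of the tree, have both endpoints of the same colour, so your $F_1,F_2$ as described do not catch them either.

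The cure, as in \citep{CS07}, is not to segregate ``tree'' non-edges from ``fill-in'' non-edges but to let each $F_i$ carry part of the tree structure too: the intervals in $F_i$ are still governed by the rooted $T$, but colour-$i$ vertices are positioned so that $F_i$ also separates certain pairs $u,v$ with $r_u$ an ancestor of $r_v$ and $T_u\cap T_v=\emptyset$. Your final paragraph is pointing in the right direction, but the target you set for the $F_i$ (only non-edges inside a common bag) is too narrow for the case analysis to close.
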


A \emph{layering} of a graph $G$ is a partition $(V_1,V_2,\dots,V_n)$ of $V(G)$ such that for every edge $vw\in E(G)$, for some $i\in[n-1]$, both $v$ and $w$ are in $V_i\cup V_{i+1}$. For example, if $r$ is a vertex of a connected graph $G$ and $V_i:=\{v\in V(G):\dist(r,v)=i\}$ for $i\geq 0$,  then $(V_0,V_1,\dots)$ is a layering of $G$. The layered tree-width of a graph $G$ is the minimum integer $k$ such that there is a tree decomposition  $(B_x:x\in V(T))$ and a layering $(V_1,V_2,\dots,V_n)$  of $G$, such that $|B_x\cap V_i| \leq k$ for each node $x\in V(T)$ and for each layer $V_i$. Of course, $\ltw(G) \leq \tw(G)+1$ and often $\ltw(G)$ is much less than $\tw(G)$. For example, \citet{DMW17} proved that every planar graph has layered treewidth at most 3, whereas the $n\times n$ planar grid has treewidth $n$. Thus the following result provides a qualitative generalisation of \cref{tw}. 

\begin{thm}
\label{ltw}
For every graph $G$, 
  \begin{equation*}
\bx(G) \leq 6 \ltw(G) + 4.
\end{equation*}
\end{thm}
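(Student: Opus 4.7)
The plan is to express $G$ as an intersection of $6\ltw(G)+4$ interval graphs and invoke \cref{sumboxicity}. Let $k:=\ltw(G)$, and fix a tree decomposition $(B_x:x\in V(T))$ together with a layering $(V_0,V_1,V_2,\dots)$ of $G$ such that $|B_x\cap V_i|\leq k$ for every node $x$ and layer $V_i$. For each shift $s\in\{0,1,2\}$ and each $i\ge 0$, set $P^{(s)}_i:=V_{3i+s}\cup V_{3i+s+1}$. For fixed $s$, the pairs $P^{(s)}_0,P^{(s)}_1,\dots$ are pairwise vertex-disjoint, and every pair of consecutive layers $(V_j,V_{j+1})$ is contained in $P^{(s)}_{\lfloor j/3\rfloor}$ for $s:=j\bmod 3$. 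Since $|B_x\cap P^{(s)}_i|\leq 2k$, restricting $(B_x)$ to $P^{(s)}_i$ gives a tree decomposition of $G[P^{(s)}_i]$ of width at most $2k-1$, so by \cref{tw} each $G[P^{(s)}_i]$ admits a box representation in $d:=2k+1$ dimensions, which after rescaling lies in $[0,1]^d$.

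Next I would build the $6k+4$ interval graphs. The first is the \emph{layering graph} $I$, in which each $v\in V_j$ is assigned the interval $[j,j+1]$; then $I$ contains every edge of $G$ and the non-edges of $I$ are exactly the pairs of vertices whose layer indices differ by at least $2$. For each $(s,\ell)\in\{0,1,2\}\times[d]$, construct an interval graph $I_{s,\ell}$ on $V(G)$ as follows: for each $v\in P^{(s)}_i$ whose $\ell$-th coordinate interval in the chosen box representation of $G[P^{(s)}_i]$ is $[a,b]\subseteq[0,1]$, assign the shifted interval $[2i+a,2i+b]\subseteq[2i,2i+1]$; and assign the universal interval $(-\infty,+\infty)$ to any vertex lying in a layer not covered by shift $s$. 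Different pairs $P^{(s)}_i,P^{(s)}_j$ sit in the disjoint slots $[2i,2i+1]$ and $[2j,2j+1]$, so vertices from different pairs of the same shift are non-adjacent in $I_{s,\ell}$; this is harmless because such layers differ by at least $2$ and the vertices are non-adjacent in $G$ as well. Hence $I_{s,\ell}\supseteq G$.

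Finally I would verify $G=I\cap\bigcap_{s,\ell}I_{s,\ell}$. The forward inclusion is immediate from the previous paragraph. Conversely, given a non-edge $uv$ of $G$: if the layers of $u$ and $v$ differ by at least $2$, then $uv$ is a non-edge of $I$; otherwise $u$ and $v$ both lie in $P^{(s)}_i$ for $s:=j\bmod 3$ and $i:=\lfloor j/3\rfloor$, where $j$ is the smaller layer index, and since $uv$ is then a non-edge of $G[P^{(s)}_i]$ some coordinate $\ell$ of its box representation separates them, making $uv$ a non-edge of $I_{s,\ell}$. Applying \cref{sumboxicity} gives $\bx(G)\leq 1+3(2k+1)=6k+4$. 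The main subtlety is the treatment of vertices in layers \emph{skipped} by a given shift and the need for three (rather than two) shifts: using pairs with period $3$ guarantees that distinct pairs within one shift are separated by at least one full layer, so the disjoint-slot construction does not accidentally destroy edges of $G$; the universal interval assignment for a skipped-layer vertex is then correct because every $G$-neighbour of that vertex lies in an adjacent layer and hence inside some pair whose slot the universal interval trivially crosses.
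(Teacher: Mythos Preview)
Your proof is correct and follows essentially the same strategy as the paper: one layering dimension to separate vertices whose layers differ by at least two, together with three groups of $2\ltw(G)+1$ dimensions (one per residue class modulo $3$) obtained by applying \cref{tw} to the induced subgraphs on consecutive layer pairs, with skipped-layer vertices padded by the full line. The only difference is cosmetic: you explicitly slot the box representations of the disjoint pieces $G[P^{(s)}_i]$ into intervals $[2i,2i+1]$, whereas the paper packages this by observing that the graph $G_s:=\bigcup_{j\equiv s}G[V_j\cup V_{j+1}]$ has treewidth at most $2\ltw(G)-1$ (since treewidth is the maximum over components) and invoking \cref{tw} once per shift.
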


\begin{proof}
Consider a tree decomposition  $(B_x:x\in V(T))$ and a layering $(V_1,V_2,\dots,V_n)$  of $G$, such that $|B_x\cap V_i| \leq \ltw(G)$ for each node $x\in V(T)$ and for each layer $V_i$. Note that $(B_x\cap (V_i\cup V_{i+1}) :x\in V(T))$ is a tree-decomposition of $G[V_i\cup V_{i+1}]$ with bags of size at most $2\ltw(G)$. Thus $\tw(G[V_i\cup V_{i+1}]) \leq 2\ltw(G) - 1$. 
For $i\in\{0,1,2\}$, let 
  \begin{equation*}
G_i := \bigcup_{j\equiv i\, (\text{mod }{3})} \!\!\! G[V_j \cup V_{j+1} ] .
\end{equation*}
Each component of $G_i$ is contained in $V_j \cup V_{j+1}$ for some $j\equiv i \pmod{3}$. 
The treewith of a graph equals the maximum treewidth of its connected components. 
Thus $\tw(G_i)\leq 2\ltw(G) -1$, and $\bx(G_i) \leq 2\ltw(G) + 1$ by \cref{tw}. 
%Add each vertex in $\cup\{V_{j+2}: j \equiv i \pmod{3}\}$ to each dimension of the the representation of $G_i$ with interval $\mathbb{R}$. 
Use three disjoint sets of $2\ltw(G) + 1$ dimensions for each $G_i$, and add each vertex not in $G_i$ to the dimensions used by $G_i$ with interval $\mathbb{R}$. 
Finally, add one more dimension, where the interval for each vertex $v\in V_i$ is $[i,i+1]$. 
For adjacent vertices in $G$, the corresponding boxes intersect in every dimension. 
Consider non-adjacent vertices $v$ and $w$ in $G$. 
Say $v\in V_a$ and $w\in V_b$. If $|a-b|\geq 2$ then in the final dimension, the intervals for $v$ and $w$ are disjoint, as desired. 
If $|a-b|=1$, then $vw$ is a non-edge in some $G_i$, and thus the intervals for $v$ and $w$ are disjoint in some dimension corresponding to $G_i$. 
%Thus the boxes corresponding to $v$ and $w$ do not intersect in the dimensions corresponding to $G_i$.
Hence we have a $3(2\ltw(G) + 1)+1$-dimensional box representation of $G$. 
\end{proof}

The following two examples illustrate the generality of \cref{ltw}. A graph is \emph{$(g,k)$-planar} if it has a drawing in a surface of Euler genus at most $g$ with at most $k$ crossings per edge; see \citep{PachToth97,KLM17,Schaefer14} for example. \citet{DEW17} proved that every $(g,k)$-planar graph has layered treewidth at most  $(4g + 6)(k + 1)$. \cref{ltw} then implies that every $(g,k)$-planar graph has boxicity at most $6 (4g + 6)(k + 1) + 4$. Map graphs provide a second example. Start with a graph $G_0$ embedded in a surface of Euler genus $g$, with each face labelled a `nation' or a `lake', where each vertex of $G_0$ is incident with at most $d$ nations. Let $G$ be the graph whose vertices are the nations of $G_0$, where two vertices are adjacent in $G$ if the corresponding faces in $G_0$ share a vertex. Then $G$ is called a \emph{$(g,d)$-map graph}; see \citep{Chen07,CGP02} for example.  \citet{DEW17} proved that every $(g, d)$-map graph has layered treewidth at most $(2g + 3)(2d + 1)$. \cref{ltw} then implies that every $(g,d)$-map graph has boxicity at most $6 (2g + 3)(2d + 1)+ 4$. By definition, a graph is $(g,0)$-planar if and only if it has Euler genus at most $g$. Similarly, it is easily seen that a graph is a $(g,3)$-map graph if and only if it has Euler genus at most $g$ (see \citep{DEW17}). Thus these results provide qualitative generalisations of the fact that graphs with Euler genus $g$ have boxicity $O(g)$, as proved by \citet{EJ13}. As discussed above, \citet{Esperet16} improved this upper bound to $O(\sqrt{g}\log g)$ and \cref{genus} improves it further to $O(\sqrt{g\log g})$. On the other hand, \cref{ltw} is within a constant factor of optimal, since \citet{CS07} constructed a family of graphs $G$ with $\bx(G)\geq (1-o(1))\tw(G)$. See \citep{BDDEW18} for more examples of graph classes with bounded layered treewidth, for which \cref{ltw} is applicable. 

%%%%%%%%%%
\section{Open Problems}
\label{questions}

We conclude with a few open problems. 

\begin{itemize}
\item  What is the maximum boxicity of graphs with maximum degree 4?

\item What is the maximum boxicity of $k$-degenerate graphs with maximum degree $\Delta$?

\item What is the maximum boxicity of graphs with treewidth $k$? \citet{CS07} proved lower and upper bounds of $k-2\sqrt{k}$ and $k+2$ respectively. %\comment{I suspect the answer is $k+1$ --- something to look at in April perhaps? }

\item What is the maximum boxicity of graphs with no $K_t$ minor? 
%It follows from a result of \citet{HOQRS17} that such graphs are acyclically $O(t^2)$-colourable. \cref{acyclic} then shows that such graphs have boxicity $O(t^4)$. \citet{EJ13} previously observed a $O(t^4\log^2 t)$ upper bound. 
The best known upper bound is $O(t^2\log t)$ due to \citet{EW18}. 
A lower bound of $\Omega( t\sqrt{\log t} )$ follows from results of \citet{Esperet16}.

%\item \comment{Very speculative and rough. Include?} 
%Let $H_k$ be the graph obtained from $K_{2k}$ by deleting a perfect matching. 
%Let $K'_k$ be the 1-subdivision of $K_k$. 
%It is known that $\bx(H_k) = k$ (or thereabouts) and that $\bx(K'_k)\geq\log\log k$. 
%For a graph $G$, let $f(G)$ be the maximum integer $k$ such that $H_k$ or $K'_k$ is an induced subgraph of $G$. Thus $\bx(G) \geq \log\log f(G)$. Is there are a function $h$ such that $\bx(G) \leq h( f(G) )$ for every graph $G$? If not, what other graphs are needed in addition to $H_k$ and $K'_k$. Do finitely many graphs suffice?

%\item Whether $\bx(\Delta)\leq O(\Delta \log\Delta)$ remains a tantalising open problem. 

\end{itemize}

\subsection*{Acknowledgement} Thanks to Louis Esperet and Tom Trotter for useful comments. The proofs of \cref{bxdim,dimbx} are due to Tom Trotter.

%\bibliographystyle{myNatbibStyle}
%\bibliography{myBibliography}
%\bibliography{Boxicity}

\def\soft#1{\leavevmode\setbox0=\hbox{h}\dimen7=\ht0\advance \dimen7
  by-1ex\relax\if t#1\relax\rlap{\raise.6\dimen7
  \hbox{\kern.3ex\char'47}}#1\relax\else\if T#1\relax
  \rlap{\raise.5\dimen7\hbox{\kern1.3ex\char'47}}#1\relax \else\if
  d#1\relax\rlap{\raise.5\dimen7\hbox{\kern.9ex \char'47}}#1\relax\else\if
  D#1\relax\rlap{\raise.5\dimen7 \hbox{\kern1.4ex\char'47}}#1\relax\else\if
  l#1\relax \rlap{\raise.5\dimen7\hbox{\kern.4ex\char'47}}#1\relax \else\if
  L#1\relax\rlap{\raise.5\dimen7\hbox{\kern.7ex
  \char'47}}#1\relax\else\message{accent \string\soft \space #1 not
  defined!}#1\relax\fi\fi\fi\fi\fi\fi}

\end{document}